\newtheorem{theorem}{Theorem}[section]
\newtheorem{corollary}{Corollary}[theorem]
\newtheorem{proposition}[theorem]{Proposition}
\newtheorem{lemma}[theorem]{Lemma}
\newtheorem{claim}[theorem]{Claim} 
\title{Minimal surfaces of finite total curvature in $\mathbb{M}^2 \times \mathbb{R}$}
\author{Rafael Ponte}
\date{}
\begin{document}
\newcommand{\Addresses}{{
  \bigskip
  \footnotesize

  \textsc{IMPA, Estrada Dona Castorina, 110, 22460-320, Rio de Janeiro,
Brazil}\par\nopagebreak
  \textit{E-mail address}: \texttt{prafael@impa.br}}}
\maketitle

\begin{abstract}
The goal of this article is to study minimal surfaces in $\mathbb{M}^2 \times \mathbb{R}$ having finite total curvature, where $\mathbb{M}^2$ is a Hadamard manifold. The main 
result gives a formula to compute the total curvature in terms of topological, geometrical and conformal data of the minimal surface. In particular, we prove the total 
curvature is an integral multiple of $2\pi$.
\end{abstract}

\section{Introduction}
Minimal surfaces with finite total curvature in three-dimensional spaces have been widely studied in the recent decades. A classical result in this subject states that,
if $\Sigma \subset \mathbb{R}^3$ is a complete immersed minimal surface of finite total curvature, then $\Sigma$ is conformally equivalent to a compact Riemann surface with a 
finite number of points removed. Moreover, its Weierstrass data can be extended meromorphically to the punctures and its total curvature is an integral 
multiple of $4 \pi$ (see [O] for those results). Other references for finite total curvature minimal surfaces in $\mathbb{R}^3$ are [F], [HK] and [W].
In [JM], the authors obtain a formula for the total curvature of a minimal surface $\Sigma$ in terms of topological and geometrical invariants (see also [F] for a discussion of 
these results). 

In [HR], minimal surfaces of finite total curvature in $\mathbb{H}^2 \times \mathbb{R}$ are studied. Analogues of the above results were proved there; mainly, a Jorge-Meeks-type
formula.

In this work, we generalize [HR] to the case of $\mathbb{M} \times \mathbb{R}$; $\mathbb{M}$ a Hadamard surface such that the sectional curvature satisfies the inequalities 
$-a^2 \leq K_{\mathbb{M}} \leq -b^2$, where $a$ and $b$ are positive constants. Inspired by [PR], we add a refinement to the generalization. We also present some examples of 
minimal surfaces with finite total curvature in $\mathbb{M} \times \mathbb{R}$.

Here, the main tools are the comparison theorems, which allow us to construct complete mean convex barriers. It is expected that these comparison theorems will be used to 
construct examples of noncompact minimal surfaces in $\mathbb{M} \times \mathbb{R}$.

The outline of the paper is the following: in Section \ref{prelims}, we establish the fundamental concepts related to the problem. In Section \ref{mainsection}, we prove the main
results. In Section \ref{example}, we describe some examples of minimal surfaces with finite total curvature and, in Section \ref{appendix}, we discuss some useful facts that are
crucial in some proofs.

This work is part of the author's doctoral thesis at Instituto Nacional de Matemática Pura e Aplicada (IMPA). The author would like to thank his advisor Harold Rosenberg for 
the patience, support and guidance. 

\section{Preliminaries}\label{prelims}
Let $X : \Sigma \rightarrow \mathbb{M} \times \mathbb{R}$ be a minimal conformal immersion of the surface $\Sigma$ in $\mathbb{M} \times \mathbb{R}$, where $\mathbb{M}$ is 
a Hadamard surface satisfying 
$-a^2 \leq K_{\mathbb{M}} \leq -b^2$, for positive constants $a$ and $b$. We can decompose the 
immersion $X$ as $(h , f)$, where $h$ and $f$ are the projections of $X$ in the first and second factors of $\mathbb{M} \times \mathbb{R}$, respectively. Since $X$ is minimal, 
the maps $h$ and $f$ are harmonic. 

We consider local conformal parameters for a simply-connected open domain $\Omega \subset \Sigma$, given by $w = u + iv$. The metric of $\Sigma$ in these
parameters has the expression ${\lambda}^2 |dw|^2$. In $\mathbb{M}$, we can take global conformal parameters $z = x + iy$ such that $\mathbb{M}$ is isometric to 
$(\mathbb{D} , \frac{4 \alpha(z)^2}{(1 - |z|^2)^2}|dz|^2)$; $\alpha$ a smooth function bounded between two positive constants (see [LTW]). 
With these notations, we can write the equation satisfied by the harmonic map $h$: 
\begin{center}
 $\sigma h_{w \bar{w}} + 2 (\sigma_z \circ h) h_w h_{\bar{w}} = 0$,
\end{center}
where $\sigma(z)^2 =\frac{4 \alpha(z)^2}{(1 - |z|^2)^2}$.

Associated to this map, we have the holomorphic \textit{Hopf differential} of $h$, given by 
\begin{center}
$\mathcal{Q}(h) = (\sigma \circ h)^2 h_w \bar{h}_{w} dw^2$  
\end{center}
(for short, we write $\phi$ for $(\sigma \circ h)^2 h_w \bar{h}_{w}$). 

Since $X$ is a conformal immersion, the following equalities hold:
\begin{center}
$\sigma^2 |h_u|^2 + f_u^2 = \sigma^2 |h_v|^2 + f_v^2$; 
 
$\sigma^2 \langle h_u , h_v \rangle + f_u f_v = 0$. 
\end{center}

A trivial consequence of the above equations is that $\phi = -f_w^2$, hence the zeroes of $\phi$ have even order. Furthermore, we define $\eta$ as the holomorphic 1-form in 
$\Omega$ given by $\eta = - 2i \sqrt{\phi(w)}dw$, where the square root of $\phi$ is chosen in such a way that 
\begin{align}\label{detf}
f = Re \int_w \eta. 
\end{align}
Considering $N : \Omega \rightarrow \mathbb{S}^2$ the Gauss map, we can write $N$ as $N_1 \partial_x + N_2 \partial_y + N_3 \partial_t$, where $x+iy$ is a local conformal 
parameter of $\mathbb{M}$ and $t \in \mathbb{R}$. We have that there exists a map $g: \Omega \rightarrow \mathbb{C}$ such that 
\begin{center}
 $N = (N_1 , N_2 , N_3)= \frac{\displaystyle ((2/\sigma)Re(g) , (2/\sigma)Im(g) , |g|^2 - 1)}{\displaystyle |g|^2 + 1}$, 
\end{center}
and it satisfies 
\begin{center}
$g^2 = -\frac{\displaystyle h_w}{\displaystyle \overline{h_{\bar{w}}}}$.
\end{center}

The metric of $\Omega$ can also be written as 
\begin{center}
$ds^2 = \sigma^2 (|h_w| + |h_{\bar{w}}|)^2 |dw|^2 = \frac{1}{4}(|g|^{-1} + |g|)^2|\eta|^2$.
\end{center}

Define $\xi$ as the function given by $\xi : = log |g|$. It is known (see [SY]) that the following equation holds:
\begin{center}
 $\Delta_0 log \frac{|h_w|}{|h_{\bar{w}}|} = -2K_{\mathbb{M}} J(h)$,
\end{center}
where $J(h)$, the \textit{Jacobian} of $h$, denotes the expression $\sigma^2(|h_w|^2 - |h_{\bar{w}}|^2)$, and $\Delta_0$ stands for the Euclidean Laplacian. 

Based on the previous facts, we obtain that the function $\xi$ satisfies the sinh-Gordon equation: 
\begin{align}\label{gordon}
\Delta_0 \xi = -2K_{\mathbb{M}}sinh (2 \xi) |\phi|.
\end{align}
Rewriting some expressions in terms of $\xi$, we have: 
\begin{align*}
ds^2 & = 4cosh^2 (\xi) |\eta|^2 \\
N_3 & = tanh (\xi).
\end{align*}
Finally, we denote by $K_{\Sigma}$ the Gaussian curvature of $\Sigma$. The Gauss equation states that
\begin{center}
$K_{\Sigma} = K_{\mathbb{M} \times \mathbb{R}} (X_u , X_v)+ K_{ext}$, 
\end{center}
where $K_{ext}$ is the extrinsic curvature of $\Sigma$. Since $X$ is minimal and the sectional curvature of $\mathbb{M} \times \mathbb{R}$ is nonpositive, the curvature 
$K_{\Sigma}$ is nonpositive. The total curvature of $\Sigma$ is defined by 
\begin{center}
 $C(\Sigma) = \int_{\Sigma} K_{\Sigma} dA$.
\end{center}
\section{Minimal surfaces of finite total curvature}\label{mainsection}
We are going to prove the following result:

\begin{theorem}\label{main} Let $X$ be a complete minimal immersion of $\Sigma$ in $\mathbb{M} \times \mathbb{R}$ with finite total curvature. Then
\begin{enumerate}
  \item $\mathcal{Q}$ is holomorphic on $S \backslash \{ p_1, \cdots , p_n \}$ and extends meromorphically to each puncture. Moreover, parametrizing a neighborhood of each 
  puncture $p_j$ by the exterior of a disc of radius $R_j$ and writing 
  \begin{center}
  $\mathcal{Q}(z) = (\sum_{k \geq 1} \frac{a_{-k}}{z^k} + P_j(z))^2 dz^2$ 
  \end{center}
  around $p_j$, where $P_j$ is a polynomial function, then $P_j$ is not identically zero. We denote the degree of $P_j$ by $m_j$.
\item The third coordinate of the unit normal vector $N_3$ converges to $0$ uniformly at each puncture.
\item The total curvature is a multiple of $2\pi$. More precisely, the following equality holds:
\begin{center}
 $\displaystyle \int_{\Sigma} K_{\Sigma} = 2 \pi \displaystyle (2 - 2g - 2n - \sum_{k=1}^{n} m_k)$.
\end{center}
\end{enumerate}
\end{theorem}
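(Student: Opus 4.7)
The plan is to adapt the Hauswirth-Rosenberg framework [HR] for $\mathbb{H}^2\times\mathbb{R}$ to the variable-curvature setting, via the sinh-Gordon equation \eqref{gordon} and the mean-convex barriers produced by the comparison theorems announced in the introduction. Since $X$ is complete with $K_\Sigma\le 0$ and $|C(\Sigma)|<\infty$, Huber's theorem identifies $\Sigma$ conformally with $S\setminus\{p_1,\dots,p_n\}$ for a compact Riemann surface $S$ of genus $g$, each end modeled on $\{|z|>R_j\}$. The Gauss equation gives $K_\Sigma = K_\mathbb{M}(h)N_3^2 - \tfrac12|A|^2$, a sum of two non-positive terms, so the pinch $-a^2\le K_\mathbb{M}\le -b^2$ turns finite total curvature into
\[\int_\Sigma N_3^2\,dA<\infty\qquad\text{and}\qquad\int_\Sigma |A|^2\,dA<\infty.\]

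The decisive step is item (2): $N_3\to 0$ uniformly at each puncture. From \eqref{gordon}, the function $|\xi|$ is subharmonic (since $-K_\mathbb{M}>0$ and $\xi\sinh 2\xi\ge 0$); I would combine this with a blow-up/contradiction argument. If $N_3(q_k)\not\to 0$ along $q_k\to p_j$, rescale the immersion near $q_k$ by $|A(q_k)|^{-1}$, using $\int|A|^2<\infty$ to pass to a limit. The comparison-theorem barriers---mean-convex tubes around geodesics of $\mathbb{M}$ lifted to $\mathbb{M}\times\mathbb{R}$, whose construction relies on $-a^2\le K_\mathbb{M}\le -b^2$---trap the rescaled immersions in controlled geometries. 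A pointed limit in $\mathbb{R}^3$ produces a complete minimal surface with $\int|A|^2<\infty$ and $N_3$ bounded below, forcing a vertical plane by classical rigidity; but on a vertical plane $N_3\equiv 0$, contradiction. Uniform decay of $\xi$ then follows from interior elliptic estimates for \eqref{gordon}.

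With $\xi\to 0$ uniformly at each puncture, the induced metric $ds^2=4\cosh^2\xi|\eta|^2\sim|\eta|^2$ is asymptotically flat on each end, and $|\phi|$ is integrable there. Since $\phi=-f_w^2$ and $f$ is single-valued, $\sqrt\phi\,dw=\pm if_w\,dw$ is a single-valued holomorphic 1-form on $\{|z|>R_j\}$. A Riemann removable-singularity argument applied in the coordinate $\zeta=1/z$ (after multiplying by an appropriate power of $\zeta$) shows $\sqrt\phi(z)$ is rational with its only pole at $z=\infty$, yielding the Laurent-plus-polynomial decomposition of item (1). That $P_j\not\equiv 0$ is forced by completeness together with item (2): if $P_j\equiv 0$ then $|\sqrt\phi|=O(|z|^{-1})$, the end has logarithmic conformal type, and its area/curvature asymptotics are incompatible with uniform $N_3\to 0$.

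For item (3), Gauss-Bonnet on $\Sigma_R:=S\setminus\bigcup_j\{|z_j|>R\}$ reads
\[\int_{\Sigma_R}K_\Sigma\,dA+\sum_j\int_{C_{R,j}}k_g\,ds=2\pi(2-2g-n).\]
The interior integral converges to $C(\Sigma)$. On the $j$-th end, $\phi\sim c_jz^{2m_j}$, and the substitution $\tilde z=z^{m_j+1}/(m_j+1)$ flattens $|\eta|^2$ to the Euclidean metric while winding $C_{R,j}$ around its image $m_j+1$ times; since $ds^2\to |\eta|^2$ uniformly on the end, the geodesic-curvature integral tends to $2\pi(m_j+1)$. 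Rearranging yields $C(\Sigma)=2\pi(2-2g-2n-\sum_k m_k)$, which exhibits $C(\Sigma)\in 2\pi\mathbb{Z}$. The main obstacle is item (2): [HR] uses explicit horocyclic barriers and the constancy of $K_{\mathbb{H}^2}$, while here one must first extract the analogous confinement from the comparison theorems and then run the blow-up in an ambient with variable $K_\mathbb{M}$.
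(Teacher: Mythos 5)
Your outline has the right global shape (Huber, meromorphic extension of $\mathcal{Q}$, vanishing of $N_3$ at the ends, Gauss--Bonnet with boundary terms), but three of the load-bearing steps are either missing or not sound as sketched.

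First, the case $P_j\equiv 0$ with $a_{-1}\neq 0$ is the hardest part of item (1) and your proposal does not address it. If $\mathcal{Q}=-\beta^2z^{-2}dz^2$ then $f=2\beta\log(|z|/R)$, the end has \emph{infinite} area and there is no quick ``incompatibility with $N_3\to0$'': such an end would be a cylindrically bounded annulus of unbounded height, which is perfectly consistent with vertical tangent planes at infinity. Ruling it out is exactly where the paper's barrier machinery enters: the comparison inequality for $G_s/G$ produces complete rotational catenoids in $\mathbb{M}\times\mathbb{R}$ with mean curvature vector pointing inwards, which by the maximum principle force the end into a vertical cylinder, and a separate compactness argument (Proposition \ref{cylinder}) shows no minimal annulus can have boundary in $B_R(p)\times\{-h',h'\}$ for $h'$ large. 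Without some substitute for these two geometric inputs the claim $P_j\not\equiv 0$ is unproved. (The case $a_{-1}=0$ is handled in the paper by a finite-area contradiction, close in spirit to what you suggest.)

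Second, your proof of item (2) by rescaling at $q_k$ by $|A(q_k)|^{-1}$ and invoking ``classical rigidity'' does not work as stated: finite total curvature forces $|A(q_k)|\to0$ along the end, so the rescaling degenerates, and the dichotomy ``limit is a vertical plane, but $N_3\equiv0$ there'' is circular (you assumed $N_3$ bounded below along the sequence, so the limit cannot be vertical; and a plane limit contradicts the normalization $|A|=1$ anyway). More importantly, item (3) needs a \emph{quantitative} decay $|\xi|,\,\|\nabla\xi\|\lesssim e^{-c|z|}$ in the flat coordinates of $|\phi|\,|dz|^2$, not just qualitative vanishing; the paper obtains this by comparing $\xi$ with the logarithm of the conformal factor of the ambient metric on $|\phi|$-discs via the Cheng--Yau maximum principle, followed by an explicit $\cosh$-type supersolution and interior gradient estimates. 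A compactness argument does not yield this rate. Relatedly, your meromorphic extension in item (1) is missing the growth estimate on $\phi$: ``$|\phi|$ is integrable on the end'' is false (the area is infinite), and a removable-singularity argument at $\zeta=1/z$ requires the polynomial bound $\cosh^2\xi\le B|z|^B$, which the paper derives from subharmonicity of $\log\cosh^2\xi$ together with the finite-total-curvature bound on $\int\Delta_0 u$, before applying Osserman's Lemma 9.6. Note also that the paper proves (1) before (2) because the flat structure of $|\phi|\,|dz|^2$ near the puncture is needed to set up the discs on which $\xi$ is estimated; your reversed order compounds the gap. Your treatment of item (3) with round circles and the substitution $\tilde z=z^{m_j+1}/(m_j+1)$ is a legitimate alternative to the paper's polygonal curves $P(C,p_j)$ and generalized lifts, but it only becomes rigorous once the exponential decay of $\nabla\xi$ is in hand (the boundary length grows like $R^{m_j+1}$ and must be beaten by the decay), and you must account for the residue term $ci/z$ in $\sqrt{\phi}$, which the leading-order substitution does not flatten.
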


\begin{proof}
It is well known that $\Sigma$ is conformally equivalent to $S \backslash \{ p_1, \cdots , p_n \}$, a compact Riemann surface $S$ punctured in a finite number of points. This 
 follows directly from Huber's theorem (see [Hu]).
\begin{enumerate}
 \item For $j = 1, \dots , n$, let $U_j$ be a neighborhood of $p_j$ such that $U_j \cap U_k = \emptyset$ if $j \neq k$ and there exists a biholomorphism 
 $\psi_j : D(0 , 1) \rightarrow U_j$ satisfying $\psi_j (0) = p_j$. 
 If $0 < r < 1$, define $U_j (r)$ by $\psi_j (D(0 , r))$, the set $S(r)$ by $S \backslash \cup^{n}_{k=1} U_k (r)$ and $S^{*}$ by $S \backslash \{ p_1 , \cdots , p_n \}$. Around 
 $p_j$, we can take $U_j(r) \backslash \{ p_j \}$ as a neighborhood of this puncture in $S^{*}$, and the corresponding end representative of $\Sigma$ can be parametrized by 
 $A(1/r) := \mathbb{C} \backslash \overline{D(0 , r)}$. 
 In these coordinates, the metric is given by $ds^2 = {\lambda}^2 |dz|^2 = 4 cosh^2 (\xi) |\phi| |dz|^2$.
 
 If $u := log cosh^2 (\xi)$, we have that 
 \begin{center}
  $\Delta_0 u = \frac{2||\nabla_0 \xi||^2}{cosh^2(\xi)} + 2 tanh (\xi) \Delta_0 \xi = \frac{2||\nabla_0 \xi||^2}{cosh^2(\xi)} 
  - 8K_{\mathbb{M}}sinh^2 (\xi) |\phi| \geq 0$.
 \end{center}

 Clearly, $u$ is a subharmonic function. 
 
 \begin{claim} \label{claim1} The quadratic differential $\mathcal{Q}$ has a finite number of zeroes in $S$.
 \end{claim}
\begin{proof}
Clearly, the number of zeroes in $S(r)$ is finite, since they are isolated and $S(r)$ is compact. Fixing $j \in \{ 1, \dots , n\}$, consider an end 
representative $E_j$ corresponding to $p_j$ and its parametrization set given by $A(R_j)$. 
If $\mathcal{Z}_j$ is the set of zeroes of $\phi$ in $E_j$, we have that
\begin{center}
 $\Delta_0 log |\phi| = \sum_{z \in \mathcal{Z}_j} 2\pi m_z \delta_{z}$,
\end{center}
where $m_z$ is the multiplicity of $z$ as a zero of $\phi$.

It is well-known that $-K_{\Sigma} {\lambda}^2 = \Delta_0 log {\lambda}$, hence $\Delta_0 u = - \Delta_0 log |\phi| - 2 K_{\Sigma} {\lambda}^2$. In the annulus 
$C(R) = \{ R_j \leq |z| \leq R\}$, we have that
\begin{center}
 $- 2\int_{C(R)} K_{\Sigma} dA - \sum_{z \in \mathcal{Z}_j \cap C(R)} 2\pi m_z = \int_{C(R)} \Delta_0 u \geq 0$, 
\end{center}
then $\sum_{z \in \mathcal{Z}_j \cap C(R)} m_z$ is uniformly bounded on $(R_j , \infty)$, and Claim \ref{claim1} is proved.
\end{proof}

A trivial corollary of last claim is that $\int_{C(R)} \Delta_0 u$ is nonnegative and bounded from above by $2C(\Sigma)$, consequently the integral $\int_{C(R)} \Delta_0 u$ is 
uniformly bounded on $(R_j , \infty)$.

\begin{claim} The inequality $cosh^2(\xi) |\phi| \leq B |z|^B |\phi|$ holds in $A(R)$, for a positive constant $B > 0$ and for 
sufficiently large $R>0$.\end{claim}
\begin{proof} This follows the same ideas of the analogous result in [HR].
\end{proof}
\begin{claim} The differential $\mathcal{Q}$ is holomorphic on $S \backslash \{ p_1, \cdots , p_n \}$ and extends meromorphically to each puncture.\end{claim}
\begin{proof} Considering $R$ to be large enough, we can take $B$ as an even integer and $\phi$ as a function without zeroes in $A(R)$. In 
that case, we have that $B z^B \phi$ is the square of a holomorphic function $\rho$. We obtain that $cosh(\xi) |\phi|^{\frac{1}{2}} \leq |\rho|$, and by Lemma 9.6 of [O],
the function $\rho$ extends meromorphically to infinity, hence we can extend $\phi$ meromorphically to the puncture.
\end{proof}
\begin{claim} If the differential $\mathcal{Q}$ is written as 
\begin{center}
$\mathcal{Q}(z) = (\sum_{k \geq 1} \frac{a_{-k}}{z^k} + P_j(z))^2 dz^2$  
\end{center}
around $p_j$, where $P_j$ is a polynomial function, then $P_j$ is not identically zero.\end{claim}

\begin{proof} First, we are going to prove the claim when $a_{-1} = 0$. In fact, if the claim is false in this case, then, up to a conformal change of coordinates, we can suppose
that $\mathcal{Q}(z) = z^{2k_j} dz^2$, for some integer $k_j$ satisfying $k_j \leq -2$. In this situation, the integral $\int_{A(R)} |\phi(z)| dz$ is finite. 
Therefore, denoting by $E'_j$ the associated end representative, we obtain that
\begin{align*}
& \int_{E'_j} -K_{\Sigma} dA = \int_{A(R)} \Delta_0 log \lambda dz = \int_{A(R)} \Delta_0 u dz \\
& \geq \int_{A(R)} \frac{2||\nabla_0 \xi||^2}{cosh^2(\xi)} dz - \int_{A(R)} 8K_{\mathbb{M}}sinh^2 (\xi) |\phi| dz \\
& \geq \int_{A(R)} 8 b^2 sinh^2 (\xi) |\phi| dz, 
\end{align*}

consequently the following inequality holds:
\begin{center} $\int_{A(R)} 8 b^2 |\phi| dz - \int_{E'_j} K_{\Sigma} dA \geq \int_{A(R)} 8 b^2 cosh^2 (\xi) |\phi| dz$.\end{center}
 We conclude that $Area(E'_j) = \int_{A(R)} 4 cosh^2 (\xi) |\phi| dz$ is finite, which contradicts the fact that a complete end of $\Sigma$ must have infinite area (see Remark 
 4 of [Fr]). 
 
 Now we prove the claim when $a_{-1}$ is nonzero. Indeed, suppose the end associated to $p_1$ satisfies $a_{-1} \neq 0$ and $P_1 \equiv 0$. Parametrizing an end representative of 
 $p_1$ (denote by $E_1$) by $A(R)$ such that $\mathcal{Q}(z) = -\beta^2 z^{-2}dz^2$, for some $\beta > 0$, we obtain the equality 
 \begin{center}
 $f(z) = 2 \beta Re(\int_z u^{-1} du) = 2 \beta log (|z|/R)$. 
 \end{center} 
 We conclude that the intersection of $E_1$ with $\mathbb{M} \times \{ t \}$ is a compact curve, for $t \geq 0$.
 
 Since $K_{\mathbb{M}} \leq -b^2$, we can take a vertical rotational catenoid $C$ in $\mathbb{M} \times \mathbb{R}$ whose mean curvature vector field points inwards, whose height is 
 bounded and such that $\partial E_1$ is disjoint from all vertical translations of $C$ (see the Appendix for the meaning of "inwards" and the existence of such catenoid). Then, 
 if $T_x(C)$ is a vertical translation of $C$ by $x \in \mathbb{R}$, we have $T_{-n}(C) \cap E_1 = \empty$ for large enough $n \in \mathbb{N}$. Moving the catenoid vertically in 
 the positive direction, since the catenoid can not have a first point of contact with $E_1$, by the maximum principle, we have that $E_1$ is cylindrically bounded, and it has 
 unbounded height. But this contradicts Proposition \ref{cylinder}, then $P_1$ must not be identically zero.
 \end{proof}
 \textbf{Remark.} Since the polynomials $P_j$ are not identically zero, we can parametrize an end representative of $p_j$ (denote by $E_j$) by 
 $A(R_j)$ such that, by Theorem 6.4 of [St], $\mathcal{Q}(z) = ((m_j + 1)z^{m_j} + \frac{ci}{z})^{2}dz^2$, for some $c \in \mathbb{R}$ (the 
 coefficient $c$ is real because the function $f$ is well defined by (\ref{detf})). We are going to assume this expression for the 
 Hopf differential of $h$ near $p_j$, unless otherwise stated.
 
 \textbf{Remark.} There are several manners to prove that $P_j$ is not the zero polynomial when $a_{-1} \neq 0$. Consider in $\mathbb{M}$ the Fermi coordinates given by 
 $\phi(s,t) = exp_{\alpha(t)} (s J\alpha'(t))$, for $(s,t) \in \mathbb{R}^2$ and some geodesic $\alpha$ which does not intersect $h(\partial E_1)$. In order to prove that $E_1$ 
 is cylindrically bounded, we could use the barriers defined by the graph of the function
 \begin{align*}
  f(s) = \frac{1}{k}log(tanh(\frac{ks}{2})), s>0,
 \end{align*}
where $k \in (0,b)$. Supposing that $h(\partial E_1)$ is contained in the region $\{ \phi(s,t) \in \mathbb{M}; s < 0 \}$, we have that the mean 
curvature vector field of the graph of $f$ points upwards (see [GR] for the proof), and proceeding as before, we conclude that $h(E_1)$ is contained in the convex hull of 
$h(\partial E_1)$, therefore $E_1 \subset D(p, R) \times \mathbb{R}$, for some $p \in \mathbb{M}$, $R>0$. In addition, we can prove that $E_1$ can not be cylindrically bounded 
considering a family of rotational catenoids with mean curvature vector field pointing inwards. We suppose this family varies from a surface containing $D(p, R) \times \mathbb{R}$ 
in its complement to a double-sheeted covering of a horizontal slice $\mathbb{H}^2 \times \{ t \}$, for a sufficiently large $t>0$ (the existence of this family of catenoids is 
guaranteed in the Appendix). Then, when we vary the catenoids, we obtain a first point of contact of $E_1$ and one of the annuli, a contradiction to the maximum principle 
(see [PR]). 
 
 \item We prove here that $N_3$ goes to $0$ in each puncture. Around a puncture $p_j$, we consider a parametrization of the associated end representative $E_j$ by 
 $A(R_j)$. We choose $R_j$ to guarantee that $\phi$ has no zeroes in $E_j$ and, in this situation, it is clear that the metric 
$g_{\phi} = |\phi(z)||dz|^2$ is flat. Denoting by $D_{|\phi|}(z , r)$ a disc in $E_j$ with respect to the metric $g_{\phi}$ centered in $z$ of radius $r$, by Proposition 2.1 
and Lemma 2.4 of [HNST] (which also can be applied to this context), there exist positive constants $R'_1$ and $c_1$ such that, if $|z| > R'_1$, then $F := \int \sqrt{\phi} dz$ 
is well defined in $D_{|\phi|}(z , c_1 |z|)$ and it is a conformal diffeomorphism onto its image. If $w$ are the coordinates in $D_{|\phi|}(z , c_1 |z|)$ induced by $F$ such that 
$w(z) := F(z) = 0$, we have that $g_{\phi} = |dw|^2$. Therefore, if $|z| > max\{ c_1 ^{-1} , R'_1\}$, define in $D_{|\phi|} (z , 1)$ the metric 
\begin{center}
$d\mu^2 = \sigma^2 |dw|^2 := \frac{\displaystyle 4 \alpha(w)^2}{\displaystyle (1 - d_{|\phi|}(w, 0)^2)^2}|dw|^2$,
\end{center}
where $d_{\phi}$ is the distance function in the metric $|dw|^2$. Notice that this metric is precisely the metric of $\mathbb{M}$ in the disc $D_{|\phi|} (z , 1)$. Then its 
curvature function, denoted by $\tilde{K}$, satisfies the inequalities $-a^2 \leq \tilde{K} \leq -b^2$.

The functions $\xi$ and $\tilde{\xi} := log \sigma$ satisfy 
\begin{align*}
\Delta_{|\phi|} \xi & = -2K_{\mathbb{M}}sinh (2 \xi);\\
\Delta_{|\phi|} \tilde{\xi} & = -\tilde{K} e^{2 \tilde{\xi}}. 
\end{align*}

If $\eta := \xi - \tilde{\xi}$, we have 
\begin{align*}
\Delta_{|\phi|} \eta & = -K_{\mathbb{M}}(e^{2 \xi} - e^{- 2 \xi} - \frac{\tilde{K}}{K_{\mathbb{M}}} e^{2 \tilde{\xi}}) \\
& \geq b^2 e^{2 \xi} - a^2 e^{- 2 \xi} - a^2 e^{2 \tilde{\xi}} \\
& \geq e^{2\tilde{\xi}}(b^2 e^{2 \eta} - a^2 C e^{- 2 \eta} - a^2), 
\end{align*}
where $C : = max_{w \in D_{|\phi|} (z , 1)} e^{-4\tilde{\xi}(w)}$. By the Cheng-Yau's maximum principle (see [CY] for the statement), $\eta$ is bounded from above 
and it has a maximum at a point $p_0 \in D_{|\phi|}(z , 1)$. Obviously, $\Delta_{\mu} \eta (p_0) \leq 0$, then, at this point, 
\begin{align*}
 -e^{2\tilde{\xi}}K_{\mathbb{M}}(e^{2 \eta} - e^{- 4 \tilde{\xi}} e^{- 2 \eta} - \frac{\tilde{K}}{K_{\mathbb{M}}}) \leq 0 & \leftrightarrow \\
 e^{2 \eta} - e^{- 4 \tilde{\xi}} e^{- 2 \eta} \leq \frac{\tilde{K}}{K_{\mathbb{M}}} \leq \frac{a^2}{b^2} & \leftrightarrow \\
 e^{4 \eta} -\frac{a^2}{b^2} e^{2 \eta} - e^{- 4 \tilde{\xi}} \leq 0 & \leftrightarrow \\
 2 e^{2 \eta(p_0)} \leq \frac{a^2}{b^2} + \sqrt{\frac{a^4}{b^4} + 4 C} =: 2C_1.
\end{align*}
Since $\eta$ maximizes at $p_0$, we obtain that $\eta \leq \eta (p_0) \leq \sqrt{C_1}$, hence we conclude the inequality $\xi \leq \tilde{\xi} + log \sqrt{C_1}$. We can apply the 
same reasoning to $- \xi$ instead of $\xi$, then we have that, at $w = 0$,
\begin{center}
$|\xi (0)| \leq  \tilde{\xi} (0) + log \sqrt{C_1} \leq \hspace{1pt}  \underset{\mathbb{D}}{\mathrm{sup}} \hspace{1.5pt} log (2 \alpha) + log \sqrt{C_1} =: C_2$,
\end{center}
and this implies that $|\xi (z)| \leq C_2$ if $|z| > max\{ c_1 ^{-1} , R'_1\}$.

Take $z \in \mathbb{C}$ such that $|z| \geq max\{ r/c_1 , R'_1 \}$. Using Euclidean coordinates $x + iy$ in $D_{|\phi|}(z , r)$, define the function 
$\Psi : D_{|\phi|}(z , r) \rightarrow \mathbb{R}$ as 
\begin{center}
$\Psi(x, y) = \frac{\displaystyle C_2}{\displaystyle cosh r} cosh(\sqrt{2}x) cosh(\sqrt{2}y)$,  
\end{center}
we have $\Delta_0 \Psi = 4 \Psi$, and $\Psi \geq C_2 \geq \xi$ in $\partial D_{|\phi|} (z , r)$. Moreover, $\Psi \geq \xi$ in $D_{|\phi|} (z , r)$.
In fact, if $\Psi - \xi$ admits a negative minimum at $p_0$, it would be in the interior of the disc, therefore $\xi(p_0) > \Psi(p_0) \geq 0$ and 
$\Delta_0 (\Psi - \xi) (p_0) \geq 0$. On the other hand, we have at $p_0$ that  
\begin{center}
$\Delta_0 (\Psi - \xi) = 4\Psi + 2K_{\mathbb{M}}sinh (2 \xi) \leq 4(\Psi + K_{\mathbb{M}} \xi) \leq 4 max \{ 1, b^2 \} (\Psi - \xi) < 0$,
\end{center}
a contradiction. Analogously, $\Psi \geq - \xi$, and then $\Psi \geq |\xi|$. Therefore, evaluating at $z$, $|\xi(z)| \leq \frac{C_2}{cosh r}$. Consequently, we conclude that
\begin{align}\label{expdecay}
|\xi(z)| \leq \displaystyle 2 C_2 \displaystyle e^{-c_1 |z|}. 
\end{align}

This estimate implies that $|\xi| \rightarrow 0$ at the punctures. Consequently, the tangent planes become vertical at infinity.

\bfseries Remark. \mdseries It is easy to verify that, for any $\epsilon \in (0 , 1)$, there exists $\delta = \delta(\epsilon)$ and $R = R (\epsilon)$ such that the disc 
$D_{|\phi|}(z , \delta|z|^{m_j + 1})$ is contained in $D(z , \epsilon |z|)$, for all $z \in \mathbb{C}$ satisfying $|z| > R$. 

\item We finally prove the last statement. Recall that we parametrized an end representative of $p_j$ (denoted by $E_j$) by 
$A(R_j)$ such that the Hopf differential of $h$ has the expression $((m_j + 1)z^{m_j} + \frac{ci}{z})^{2}dz^2$, for some $c \in \mathbb{R}$. 
Without loss of generality, we can assume that 
\begin{align}\label{radiusassump}
R_j^{m_j + 1} > 1 + (4 \pi |c|/cos(\pi/10)).  
\end{align}
Then, we can locally define the map $F(z) := \int \sqrt{\phi(z)} dz = \int (m_j + 1)z^{m_j} + \frac{ci}{z}dz$. It is clear that $Im F$ is globally well defined, and if $\theta$ is 
a locally defined argument function, we have
\begin{align*}
 Im F (z) = clog|z| + |z|^{m_j + 1} sin((m_j + 1)\theta)
\end{align*}
and, locally,
\begin{align*}
 Re F (z) = -c\theta + |z|^{m_j + 1} cos((m_j + 1)\theta).
\end{align*}

From now on, we denote by $F_{\Omega}$ a branch of $F$ defined on the domain $\Omega \subset A(R_j)$. 

Consider now the following concept:

\bfseries Definition. \mdseries Given a piecewise smooth curve $\gamma: [0 , l] \rightarrow \mathbb{C}$, a \textit{generalized lift} of $\gamma$ is a piecewise smooth curve 
$\beta: [0 , l] \rightarrow A(R_j)$ such that there exists a partition $0 = t_0 < t_1 < \cdots < t_{n+1} = l$, for some $n \in \mathbb{N}$ and domains 
$D_i \subset A(R_j)$, $i = 0 , \cdots , n$, where we can define a branch of the logarithm, such that 
\begin{itemize}
 \item $\beta([t_i , t_{i+1}]) \subset D_i$, $i = 0, \cdots , n$;
 \item $\gamma$ is the juxtaposition of the paths $F_{D_0}(\beta|_{[t_0 , t_1]}), \cdots , F_{D_n}(\beta|_{[t_n , t_{n+1}]})$, in this order.
\end{itemize}

This result is crucial for the proof:
\begin{lemma}\label{construction}
Fix $C > 0$. Let $\gamma^C_1 : [0 , 8C] \rightarrow \mathbb{C}$ be the curve given by

\[  \gamma^C_1(t) =  \left\{
\begin{array}{ll}
      C + it, &  t \in [0 , C]; \\
      2C - t + iC, & t \in [C , 3C]; \\
      -C + i(4C - t) & t \in [3C , 5C]; \\
      t -6C -iC & t \in [5C , 7C]; \\
      C + i(t - 8C) & t \in [7C , 8C]. \\
\end{array} 
\right. \]
Let also $\gamma^C : [0 , 8(m_j + 1)C] \rightarrow \mathbb{C}$ be the curve $\gamma^C_1$ traversed $m_j + 1$ times. Then, for $C$ sufficiently large, the curve $\gamma^C$ admits a 
generalized lift $\tilde{\gamma}^C$ which starts and finishes at the same connected component of $(Im F)^{-1}(0)$.
\end{lemma}
\begin{proof}
Suppose first that $c=0$. Hence $F: A(R_j) \rightarrow A(R_j^{m_j + 1})$ is a well-defined covering map, and if $C > R_j^{m_j + 1}$, it is enough to take the usual lift
of $\gamma^C$.

Now, suppose $c$ is nonzero. It is known (see [HNST]) that, if $R_j$ is large enough, the set $(Im F)^{-1}(0)$ consists of $2(m_j + 1)$ connected components, denoted by 
  $l_0, \cdots , l_{2m_j +1}$, and each of them is a smooth curve whose boundary is a point in $\{z; |z| = R_j \}$ and, for each $k \in \{ 0 , \cdots , 2m_j + 1 \}$, the curve $l_k$
is contained in the set 
\begin{align*}
\{z \in A(R_j); \frac{k \pi}{(m+1)} - \frac{\pi}{10(m+1)} < arg(z) < \frac{k \pi}{(m+1)} + \frac{k \pi}{10(m+1)}\}.
\end{align*}

Consider $M_0 := R_j^{m_j + 1} + 4\pi |c|$ and $M_1 := max \{ |ImF(z)|; |z| = R_j\}$. Choose $C > max \{ M_0 , M_1\}$. For each
$k = 0 , \cdots , 2m_j + 1$, let $\Delta_k$ be the domain 
\begin{align*}
 \{ |z| > R_j \, and \, \frac{k\pi}{m_j +1} - \frac{\pi}{10(m_j + 1)} < arg(z) < \frac{(k + 1)\pi}{m_j + 1} + \frac{\pi}{10(m_j + 1)}\},
\end{align*}
and let $\Omega_k$ be the (open) subdomain of $\Delta_k$ bounded by $l_{k}$, $l_{k+1}$ and $\{z ; |z| = R_j \}$ (here, $l_{2m_j + 2} = l_0$). We can consider an argument 
function in $\Delta_k$ taking values in the interval $(\frac{k\pi}{m_j +1} - \frac{\pi}{10(m_j + 1)} , \frac{(k + 1)\pi}{m_j + 1} + \frac{\pi}{10(m_j + 1)})$, then we can define 
$F_k$ as $F_{\Delta_k}$. The assumption (\ref{radiusassump}) implies that $Re F_k(z) > 0$ if $z \in l_{2k}$. 
In fact, when $z \in l_{2k}$, we have 
\begin{center}
$Re F_k(z) = |z|^{m_j + 1}cos[(m_j + 1)arg z] - arg z c \geq R_j^{m_j+1}cos(\pi/10) - 4\pi |c| > 0$. 
\end{center}
The same argument proves that $Re F_k(z) < 0$ if $z \in l_{2k+1}$. Since $\phi$ never vanishes in $A(R_j)$ (we can choose $R_j$ to be large enough), 
the derivative of $Re F_k$ is never zero along $l_k$. Hence, in particular, since $C > M_0$, there is a unique point $p \in l_0$ such that $F_0(p) = C$.

In order to construct $\tilde{\gamma}^C$, the first step is to obtain a (usual) lift of $\gamma^C|_{[0, 4C]}$ with respect to $F_0 : \Delta_0 \rightarrow \mathbb{C}$. Consider 
the number
\begin{center}
$C_0 := sup \{ t \in [0 , 4C] ; \exists \beta_t : [0 , t] \rightarrow \overline{\Omega}_0 , \beta_t (0) = p\, \text{and}\, F_0 \circ \beta_t = \gamma^C|_{[0 , t]}  \}$.
\end{center}
Since $\phi$ does not have zeroes in $A(R_j)$, by the Inverse Function Theorem and the fact that $F_0$ preserves orientation, there exists a path 
$\beta_{\delta} : [0 , \delta] \rightarrow \overline{\Omega}_0$ satisfying $\beta_{\delta} (0) = p$ and $F_0 \circ \beta_{\delta} = \gamma^C|_{[0 , \delta]}$, for some $\delta > 0$. 
Hence $C_0 > 0$. Moreover, we can define a lift $\hat{\beta} : [0 , C_0) \rightarrow \overline{\Omega}_0$ of $\gamma^C|_{[0 , C_0)}$. 

Now, we prove that we can extend $\hat{\beta}$ to $[0 , C_0]$, taking values in $\overline{\Omega}_0$. In order to do this, take a sequence $(t_n)_{n \in \mathbb{N}}$ in $[0 , C_0)$ converging to $C_0$. We know that 
either $|Re F_0(\hat{\beta}(t_n))| = C$, for all $n$, or $Im F_0(\hat{\beta}(t_n)) = C$, for all $n$, up to taking a subsequence. 

Suppose that $(|\hat{\beta}(t_n)|)_{n}$ escapes to infinity. Since the sequence 
\begin{center}
$(F_0(\hat{\beta}(t_n)))_n$  
\end{center}
is bounded, by the expression of $F_0$, we conclude that 
$(\hat{\beta}(t_n) / |\hat{\beta}(t_n)|)_{n}$ converges to $(0,0)$, a contradiction. It proves that, for any sequence $(t_n)_{n \in \mathbb{N}}$ in $[0 , C_0)$ converging to 
$C_0$, the sequence $(\hat{\beta}(t_n))_{n}$ has an accumulation point in $\overline{\Omega}_0$ (up to taking a subsequence, we can suppose that $(\hat{\beta}(t_n))_{n}$ converges). Suppose 
$(\hat{\beta}(t_n))_n$ converges to a point in $\{z ; |z| = R_j \}$. If $|Re F_0(\hat{\beta}(t_n))| = C$, for all $n$, we have that 
\begin{center}
$C \leq |arg(\hat{\beta}(t_n)) c| + |\hat{\beta}(t_n)|^{m_j + 1} \leq 2 \pi |c| + |\hat{\beta}(t_n)|^{m_j + 1}$, 
\end{center}
and taking limits, we conclude that $C \leq 2 \pi |c| + R_j^{m_j + 1} < M_0$, a contradiction. Since $C > M_1$, it is not possible that $Im F_0(\hat{\beta}(t_n)) = C$, for all 
$n$, therefore $(\hat{\beta}(t_n))_n$ does not converge to a point in $\{z ; |z| = R_j \}$.

If the sequence $(\hat{\beta}(t_n))_{n}$ converges to a point $q \in \Omega_0$, by continuity, we have that 
$F_0(q) \in \{z \in \mathbb{C} ; max\{ |Re F_1(z)| , |Im F_1 (z)| \} = C \}$ and that $\gamma^C(C_0) = F_0 (q)$. Taking a neighborhood $U \subset \Omega_0$ of $q$ such that 
${F_0}|_U$ is a diffeomorphism onto its image, there exists $\delta > 0$ such that $\gamma([C_0 - \delta , C_0 + \delta]) \subset U$. Therefore, we can define 
$\beta_{C_0 + \delta} : [0 , C_0 + \delta] \rightarrow \Omega_0$ as

\[  \beta_{C_0 + \delta}(t) =  \left\{
\begin{array}{ll}
      \hat{\beta}(t), &  t \in [0 , C_0); \\
      F_0^{-1}(\gamma^C(t)), & t \in (C_0 - \delta , C_0 + \delta], \\
\end{array} 
\right. \]

and we deduce that $C_0 + \delta \leq C_0$, a contradiction. 

It remains to analyze the case when $(\hat{\beta}(t_n))_{n}$ converges to a point $q$ in $l_0 \cup l_1$. Clearly, $F_0$ is defined at $q$ and it is a real number. If $q \in l_0$,
since $|Re F_0(\hat{\beta}(t_n))| = C$ and $Re F_0 > 0$ along $l_0$, we conclude that $Re F_0(q) = C$, then $p=q$. Proceeding as before, we can smoothly extend $\hat{\beta}$
to $\beta_{C_0} : [0 , C_0] \rightarrow \overline{\Omega}_0$. Since $\beta_{C_0}$ is a lift of $\gamma^C|_{[0 , C_0]}$ and $C_0 \leq 4C$, we obtain that $0 < C_0 \leq C$. 
Furthermore, $Im F_0(\beta_{C_0})$ must be strictly increasing along $[0 , C_0]$, but $Im F_0(\beta_{C_0}) (C_0) = Im F_0(\beta_{C_0}) (0)$, a contradiction. Therefore, 
$q \in l_1$ and $C_0 = 4C$. Therefore, $F_0 (4C) = -C$, and we obviously are able to smoothly extend $\hat{\beta}$ to $\beta_{4C} : [0 , 4C] \rightarrow \overline{\Omega}_0$.

Inductively, for $k = 1, \cdots , 2m_j + 1$, we have a curve $\beta_{4kC} : [4kC , 4(k+1)C] \rightarrow \overline{\Omega}_k$ starting at $\beta_{4(k-1)C}(4kC)$, lifting 
$\gamma^C|_{[4kC , 4(k+1)C]}$ with respect to $F_k : \Delta_k \rightarrow \mathbb{C}$, and $\beta_{4kC} (4(k+1)C) \in l_{k+1}$. Finally, we define $\tilde{\gamma}^C$ as the 
juxtaposition of $\beta_{4C} , \cdots , \beta_{8(m_j + 1)C}$, in this order. Evidently, the point $\tilde{\gamma}^C(8(m_j + 1)C)$ is in $l_{2m_j + 2} = l_0$, as well as 
$\tilde{\gamma}^C(0)$.
\end{proof}

A consequence of the arguments of the preceding proof is that we can cover $A(R_j)$ by domains $\Delta_k$, $k = 0 , \cdots , 2m_j + 1$, where we can 
define an integral of $\sqrt{\phi}$, denoted by $F_k : \Delta_k \rightarrow \mathbb{C}$ (the domains $\Delta_k$ from Lemma \ref{construction} can also be considered in
$A(R_j)$ when $c = 0$). Since the argument functions used to define the maps $F_k$ are bounded from above by $4\pi$, in absolute value, we have that there exist $R^*, C_* > 0$ 
independent on $k$ such that, when $|z| > R^*$, the following inequality holds:
\begin{align*}
C_* |z|^{m_j + 1} > |F_k(z)| > C_*^{-1} |z|^{m_j + 1}.
\end{align*}

Let $P(C , p_j )$ be the curve obtained from $\tilde{\gamma}^C$ when we connect $\tilde{\gamma}^C(0)$ and $\tilde{\gamma}^C(8(m_j + 1)C)$ by the shortest curve 
segment in $l_0$. 

We list some properties of $P(C , p_j )$, whose proofs can be deduced by the arguments in the demonstration of Lemma \ref{construction}.
\begin{enumerate}
 \item $P(C , p_j )$ is a simple, piecewise smooth closed curve. If $c = 0$, it has $4(m_j + 1)$ vertices, all of them having internal angle $\frac{\pi}{2}$; if 
 $c \neq 0$, it has $4(m_j + 1) + 2$ vertices, one of them having internal angle $\frac{3\pi}{2}$, and the other ones having internal angle $\frac{\pi}{2}$.
 \item The bounded region determined by $P(C , p_j )$ contains $D(0 , R_j)$.
 \item Given any compact set $K$ in $\mathbb{C}$, there exists $\tilde{C}$ such that $P(C , p_j )$ does not intersect $K$, $\forall C > \tilde{C}$. 
 \end{enumerate}

For $k = 0 , \cdots , m_j$ and $l = 0, 1$, let $A^l_k (C)$ be the arc 
\begin{center}
$\tilde{\gamma}^C([(8k + 4l + 1)C ,(8k + 4l + 3)C])$.  
\end{center}
By construction, $A^l_k (C)$ is bijectively mapped onto a subset of $\{ w \in \mathbb{C} ; |Im w| = C \}$ by the map $F_{2k + l}$. Let also 
$B^l_k (C)$ be the arc 
\begin{center}
$\tilde{\gamma}^C([(8k + 4l)C , (8k + 4l + 1)C]) \cup \tilde{\gamma}^C([(8k + 4l + 3)C , (8k + 4l + 4)C])$, 
\end{center}
for $k = 0 , \cdots , m_j$ and $l = 0, 1$. 
Each of these curves are one-to-one mapped onto a subset of $|\{ w \in \mathbb{C} ; |Re w| = C \}$ by the map $F_{2k +l}$. Denote by $B^* (C)$ the (possibly degenerate) 
compact arc of $P(C , p_j )$ which connects $\tilde{\gamma}^C(0)$ and $\tilde{\gamma}^C(8(m_j + 1)C)$ 
lying in $l_0$. We are going to denote by $\mathcal{I}(C)$ and $\mathcal{R}(C)$ the union of the curves $A^l_k (C)$ and $B^l_k (C)$, respectively. It is true that there is 
a small neighborhood $V$ of $A^l_k (C)$ contained in $\Omega_{2k + l}$ such that $F_{2k+l} : V \rightarrow F_{2k+l}(V)$ is a conformal diffeomorphism. A similar property holds 
for the curves $B^l_k (C)$ and for the curve $B^* (C)$.

We now proceed to the proof. Applying Gauss-Bonnet on $S(r)$, we obtain that
\begin{align}\label{gbcompact}
 \displaystyle \int_{S(r)} K_{\Sigma} dA + \int_{\partial S(r)} \kappa_g = 2\pi (2 - 2g - n).
\end{align}

Consider, in the $z$-plane, the annulus $\Omega(C , R_j, p_j )$ in $\mathbb{C}$ bounded by the union of two curves: the circle $\{ |z| = R_j \}$ and the curve 
$P(C , p_j )$. Again, by Gauss-Bonnet, we have
\begin{align}\label{gbpart}
\int_{\Omega(C , R_0, p_j )} K_{\Sigma} dA + \int_{P(C , p_j )} \kappa_g - \int_{\{ |z| = R_j \}} \kappa_g = -2\pi (m_j + 1). 
\end{align}

Summing Equation (\ref{gbcompact}) with the equations in (\ref{gbpart}) for all $j$, we obtain
\begin{align*}
\int_{\tilde{S}(C)} K_{\Sigma} dA + \sum_{j=1}^{n} \int_{P(C , p_j )} \kappa_g = 2\pi (2 - 2g - 2n - \sum_{j=1}^{n} m_j), 
\end{align*}
where $\tilde{S}(C) = S(r) \cup [\bigcup^{n}_{j=1} \Omega(C , R_j, p_j )]$. As $C$ goes to infinity, $\tilde{S}(C)$ goes to $S^{*} \cong \Sigma$. It is enough to
prove that $\int_{P(C , p_j )} \kappa_g$ goes to zero as $C$ goes to $+ \infty$.

For each $k \in \{ 0 , \cdots , 2m_j + 1 \}$, we know that $ImF^{-1}(0) \cap \Omega_k$ is at a positive distance from the lines that bound $\Delta_k$. Then, there
exist positive numbers $\delta_k$, $\epsilon_k$ and $R^{*}_k$ such that $D_{|\phi|}(z , \delta_k |z|^{m_j + 1}) \subset D(z , \epsilon_k |z|)$, for all $z \in A(R_k)$ satisfying 
$|z| > R^{*}_k$. Moreover, choosing $\epsilon_k$ to be small enough, we can assure that $D(z , \epsilon_k |z|) \subset \Delta_k$ when $z \in \Omega_k$ and $|z| > R^{*}_k$. 

If $\epsilon^{(0)} := min \{ \epsilon_0 , \cdots , \epsilon_{2m_j +1} \}$, we take $R^{*} > 0$ such that, if $|z| > R^{*}$, the following properties hold:
\begin{itemize}
 \item $D_{|\phi|}(z , 1) \subset D(z , \epsilon^{(0)} |z|) \subset \Delta_k$, for some $k$ depending on $z$;
 \item $F_k: D_{|\phi|}(z , 1) \rightarrow F(D_{|\phi|}(z , 1))$ is a conformal diffeomorphism;
 \item $C_* |z|^{m_j + 1} > |F_k(z)| > C_*^{-1} |z|^{m_j + 1}$;
 \item There exist positive constants $\widehat{C}$ and $\hat{c}$, not depending on $k$, such that 
 \begin{align*}
 \underset{D_{|\phi|}(z , 1)}{\mathrm{sup}} \hspace{1.5pt} |\xi| \leq \displaystyle \widehat{C} \displaystyle e^{-\hat{c} |z|};
 \end{align*}
 \item $\underset{D_{|\phi|}(z , 1)}{\mathrm{sup}} \hspace{1.5pt} cosh (2\xi) \leq 2$.
\end{itemize}

We can consider $w$-coordinates in $D_{|\phi|}(z , 1)$ induced by $F_k$, $k$ depending on $z$ (notation: $w := F_k(z)$); in these parameters, the function $\xi$ satisfies the 
equation
\begin{align}\label{normgordon}
\Delta_{|\phi|} \xi = -2K_{\mathbb{M}}sinh (2 \xi).
\end{align}

If $z$ satisfies $|z| > R^{*}$, define $B_1(z)$ as $D_{|\phi|}(z , 1)$. By Theorem 3.9 of [GT], we can conclude the following 
interior gradient estimate for the Poisson equation:
\begin{align*}
\underset{B_1(z)}{\mathrm{sup}} \hspace{1.5pt} ||\nabla \xi|| \leq \widetilde{C}(\underset{B_1(z)}{\mathrm{sup}} \hspace{1.5pt} |\xi| +  \underset{B_1(z)}{\mathrm{sup}} \hspace{1.5pt}|2K_{\mathbb{M}}sinh (2 \xi)|),
\end{align*}
for a universal constant $\widetilde{C}$. Since $\underset{B_1(z)}{\mathrm{sup}} \hspace{1.5pt} cosh (2\xi) < 2$, we obtain that
\begin{center}
$\underset{B_1(z)}{\mathrm{sup}} \hspace{1.5pt}|sinh (2 \xi)| < 4\underset{B_1(z)}{\mathrm{sup}} \hspace{1.5pt} |\xi|$. 
\end{center}
Therefore, we have the estimate
\begin{align*}
\underset{B_1(z)}{\mathrm{sup}} \hspace{1.5pt} ||\nabla \xi|| \leq 9 \widetilde{C} max \{ 1 , a^2\} \underset{B_1(z)}{\mathrm{sup}} \hspace{1.5pt} |\xi|.
\end{align*}
By the properties stated above, we rewrite the estimate as
\begin{align*}
\underset{B_1(z)}{\mathrm{sup}} \hspace{1.5pt} ||\nabla \xi|| \leq \widetilde{C}  e^{-\tilde{c} |w|^{(m_j +1)^{-1}}},
\end{align*}
renaming $9 \widetilde{C} max \{ 1 , a^2\} \widehat{C}$ by $\widetilde{C}$, for simplicity. Clearly, $9 \widetilde{C} max \{ 1 , a^2\} \widehat{C}$ does not depend on $k$. In 
particular, we conclude that
\begin{align}\label{expdecaygrad}
||\nabla \xi (w)|| \leq \widetilde{C}  e^{-\tilde{c} |w|^{m'_j}},
\end{align}
for $m'_j := (m_j +1)^{-1}$.

First, let us prove that $\int_{\mathcal{I}(C)} \kappa_g ds$ goes to $0$ as $C$ goes to $+\infty$. Fixing a curve $A^0_k (C)$ in $\mathcal{I}(C)$, 
we know that this curve can be parametrized as $\tau_{C}(x) = x + iC$, for $x \in [-C , C]$. Based on [H], we have that 
\begin{center}
$ k_{\tau_{C}} =  - \frac{ \displaystyle  \xi_y }{\displaystyle 2 cosh (\xi)}$,  
\end{center} 
where $k_{\tau_{C}}$ is the geodesic curvature of $\tau_{C}$ as a curve in $\mathbb{M} \times \mathbb{R}$.

Along the curve $\tau_C$, we have that, when $|w|$ is sufficiently large, by the estimate in (\ref{expdecaygrad}), 
\begin{align*}
 |\xi_y (w)| & \leq ||\nabla \xi(w)|| \leq \displaystyle \widetilde{C} \displaystyle e^{-\tilde{c}|w|^{m'_j}} \leq \widetilde{C} e^{-\tilde{c}_1 (|x|^{m'_j} + |C|^{m'_j})},  
\end{align*}
for positive constants $\widetilde{C}, \tilde{c}$ and $\tilde{c}_1$. Therefore, we have
\begin{align*}
 \int_{\tau_{C}} |\kappa_g| ds & \leq \int_{\tau_{C}} |\kappa_{\tau_{C}}| ds = \int_{0}^{C} |\xi_y| dx \\
 & \leq \widetilde{C} \int_{0}^{+ \infty} e^{-\tilde{c}_1 (|x|^{m'_j} + |C|^{m'_j})} dx \\
 & \leq \widetilde{C} e^{-\tilde{c}_1 |C|^{m'_j}} \int_{- \infty}^{+ \infty} e^{-\tilde{c}_1 |x|^{m'_j}} dx,
\end{align*}
and the last term certainly goes to zero as $C$ goes to $+ \infty$. The same argument can be applied to $A^1_k (C)$, and then we conclude that $\int_{\mathcal{I}(C)} \kappa_g ds$
converges to zero as $C$ goes to $+ \infty$.

Now, we are going to prove that 
\begin{center}
$\int_{\mathcal{R}(C)} \kappa_g \rightarrow 0$ as $C \rightarrow +\infty$. 
\end{center}
As in the previous case, we are going to compute the curvature of $\chi_C(y) = (h(C , y) , 2 y)$ as a curve in $\mathbb{M} \times \mathbb{R}$. We have 
$|h_y(C , y)|^2 = 4 sinh^2\xi(C , y)$ and $\chi'_C = X_y$. It is clear that $\overrightarrow{K}_{\chi_C}$, a curvature vector of $\chi_C$, has the expression
\begin{align*}
\overrightarrow{K}_{\chi_C} = \frac{1}{4 cosh^2 (\xi)}\nabla_{X_y} X_y - \frac{\xi_y sinh (\xi)}{4 cosh^3 (\xi)} X_y,
\end{align*}
$\nabla$ the Levi-Civita connection of $\mathbb{M} \times \mathbb{R}$.

We will decompose $\overrightarrow{K}_{\chi_C}$ in terms of the frame $\{ X_x , X_y , N \}$. It is easy to verify that 
\begin{center}
$N = \left( \frac{\displaystyle h_y}{\displaystyle 2 cosh (\xi) sinh (\xi)} , tanh (\xi) \right)$.
\end{center}
By simple computations, we have the identity
\begin{align*}
 \nabla_{X_y} X_y = -tanh (\xi) \xi_x X_x +  tanh (\xi) \xi_y X_y - 2 \xi_y N, 
\end{align*}
thus we have
\begin{align*}
\overrightarrow{K}_{\chi_C} = - \frac{\xi_x senh (\xi)}{4 cosh^3 (\xi)} X_x - \frac{\xi_y }{2 cosh^2 (\xi)} N.
\end{align*}
Finally, we conclude that, along $\chi_C$,
\begin{align*}
 |\kappa_g|^2 \leq |\overrightarrow{K}_{\chi_C}|^2 \leq \frac{\xi^2_x senh^2 (\xi) + \xi^2_y}{4 cosh^4 (\xi)} \leq \frac{||\nabla \xi||^2}{ 4 cosh^2 (\xi)}, 
\end{align*}
and the conclusion follows as in the first case.

We finally prove that $\int_{B^* (C)} \kappa_g \rightarrow 0$ as $C \rightarrow +\infty$. Using the $w$-coordinates induced by $F_0$, we have that $B^* (C)$ is contained in the 
real interval $[C - + 2\pi |c|, C + 2\pi |c|]$ of the $w$-plane. Proceeding exactly as in the first case, we obtain the estimate 

\begin{align*}
 \int_{B^* (C)} |\kappa_g| ds  \leq \int_{C - 2\pi |c| }^{C + 2\pi |c|} |\xi_y| dx \leq \widetilde{C} \int_{C - 2\pi |c|}^{+ \infty} e^{-\tilde{c}_1 |x|^{m'_j} } dx, 
\end{align*}
 
 which goes to $0$ as $C$ goes to $+ \infty$.
\end{enumerate}
\end{proof}
We state below an easy consequence of Theorem \ref{main} (see [PR]):
\begin{corollary}\label{profile}
Let $S$ be a complete, orientable minimal surface with finite total curvature, and $p$ an end of $S$. If $m_p \geq 0$ is the integer associated to $p$, then $p$ corresponds
to $m_p + 1$ geodesics $\gamma_1 ,..., \gamma_{m_p+1} \subset \mathbb{M}^2 \times \{ + \infty \}$, $m_p + 1$ geodesics 
$\Gamma_1 ,..., \Gamma_{m_p+1} \subset \mathbb{M}^2 \times \{ - \infty \}$, and $2(m_p + 1)$ vertical straight lines (possibly some of them coincide) in 
$\partial_{\infty} \mathbb{M}^2 \times \mathbb{R}$, each one joining an endpoint of some $\gamma_j$ to an endpoint of some $\Gamma_j$.
\end{corollary}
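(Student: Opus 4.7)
The plan is to exploit the explicit normal form of the Hopf differential near an end given by Theorem \ref{main}. Parametrize an end representative of $p$ by $A(R)$ so that
\[
\mathcal{Q}(z) = \left((m_p+1)z^{m_p} + \frac{ci}{z}\right)^2 dz^2
\]
for some $c \in \mathbb{R}$, and let $F(z) = z^{m_p+1} + ci \log z$ be a local holomorphic primitive of $\sqrt{\phi}$. Then $\eta = -2i\, dF$ and, by (\ref{detf}), the height function satisfies $f = 2\, Im\, F$ (which is globally well defined, while $Re\, F$ is only defined locally). Writing $z=re^{i\theta}$ one has $Im\, F = r^{m_p+1}\sin((m_p+1)\theta) + c \log r$, which is dominated by the first term for $r$ large.

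As already used in the proof of Theorem \ref{main}, the set $(Im\, F)^{-1}(0)$ consists of $2(m_p+1)$ unbounded smooth components $l_0, \ldots, l_{2m_p+1}$, each asymptotic to a ray of argument $k\pi/(m_p+1)$. These curves partition the end into $2(m_p+1)$ sectors on which $Im\, F$ has a fixed sign, alternating between adjacent sectors. Consequently $f \to +\infty$ on $m_p+1$ sectors, $f\to -\infty$ on the remaining $m_p+1$ sectors, and $f$ stays bounded along each $l_k$. I would treat the three resulting asymptotic behaviors separately.

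For a sector $S^+$ on which $f\to+\infty$: the exponential decay (\ref{expdecay}) of $\xi$ together with $N_3=\tanh\xi$ forces the tangent planes of $X(S^+)$ to become vertical at infinity. Comparing $X(S^+)$ to the family of mean-convex vertical rotational catenoids constructed in the Appendix and invoking the maximum principle exactly as in the last Remark of the proof of Theorem \ref{main} (and as in [PR]), one shows that $h(S^+)$ is horizontally asymptotic to a complete geodesic $\gamma_k$ of $\mathbb{M}^2$, interpreted as a geodesic in $\mathbb{M}^2\times\{+\infty\}$. The symmetric argument on sectors with $f\to-\infty$ produces the $m_p+1$ geodesics $\Gamma_k\subset\mathbb{M}^2\times\{-\infty\}$. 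Along each $l_k$, the height $f$ stays bounded while $|z|\to\infty$, so $h(l_k)$ exits to $\partial_\infty\mathbb{M}^2$; combined with asymptotic verticality of the tangent planes, this forces $X(l_k)$ to converge to a vertical straight line in $\partial_\infty\mathbb{M}^2\times\mathbb{R}$. Since $l_k$ is the common boundary between one sector with $f\to+\infty$ and one with $f\to-\infty$, this vertical line joins an endpoint of the corresponding $\gamma_j$ to an endpoint of the corresponding $\Gamma_j$, producing exactly the $2(m_p+1)$ vertical lines of the statement.

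The main obstacle will be the first step: upgrading the qualitative information $N_3\to 0$ to the rigidity assertion that the projection of each going-up (or going-down) sector is asymptotic to a single complete geodesic of $\mathbb{M}^2$. This is where the comparison geometry of the Appendix — catenoidal barriers together with the maximum principle — is essential, and it is what makes this corollary a direct descendant of Theorem \ref{main}.
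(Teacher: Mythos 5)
The paper offers no proof of this corollary at all --- it is stated as ``an easy consequence of Theorem \ref{main} (see [PR])'' --- so the comparison here is against the intended argument from [PR]/[HNST] rather than against a written proof. Your combinatorial skeleton is exactly right and is the part that genuinely follows from Theorem \ref{main}: $f = 2\,\mathrm{Im}\,F$, the level set $(\mathrm{Im}\,F)^{-1}(0)$ has $2(m_p+1)$ components $l_k$ asymptotic to the rays $\arg z = k\pi/(m_p+1)$, the sign of $f$ alternates between adjacent sectors, and each $l_k$ separates a going-up sector from a going-down sector, which accounts for the $m_p+1$ geodesics at $+\infty$, the $m_p+1$ at $-\infty$, and the $2(m_p+1)$ vertical lines.

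The gap is in the step you yourself flag as the main obstacle, and the tool you propose for it is the wrong one. The catenoid-plus-maximum-principle argument from the Remark after Claim 3.4 is a \emph{trapping} argument: sweeping by rotational annuli with mean-convex sides confines a surface to a solid cylinder $D(p,R)\times\mathbb{R}$ (and is used there to derive a contradiction with Proposition \ref{cylinder}). It cannot produce convergence of $h(S^+)$ to a single complete geodesic; a going-up sector projects onto an unbounded region reaching $\partial_\infty\mathbb{M}$, so no configuration of rotational catenoids isolates a geodesic. The mechanism that actually delivers the asymptotic profile is quantitative, not barrier-based: pass to the natural coordinates $w=F(z)$, where $|\phi|\equiv 1$, and use the exponential decay (\ref{expdecay})--(\ref{expdecaygrad}) of $\xi$ and $\nabla\xi$. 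Along $\{\mathrm{Re}\,w=\text{const}\}$ the horizontal speed is $\sigma|h_y|=2|\sinh\xi|$, which is integrable, so these curves have horizontally convergent projections and the end is vertically ``thin''; along the level curves $\{f=2y\}$ the projection $h(\cdot,y)$ has speed $2\cosh\xi\to 2$ and geodesic curvature in $\mathbb{M}$ controlled by $|\nabla\xi|$ and $|\xi|$, so these curves converge to a complete geodesic of $\mathbb{M}$ as $y\to\pm\infty$; and along $l_k$ the height stays bounded while the projection has infinite length and converges to a point of $\partial_\infty\mathbb{M}$, giving the vertical lines. This is the content of [HNST] that [PR] invokes, and it is the step your proposal asserts rather than proves. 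The remaining bookkeeping in your last paragraph is fine once that convergence is established.
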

The \textit{asymptotic profile} of the end $p$ is the asymptotic polygon formed by the curves $\gamma_i$, $\Gamma_j$ and the vertical straight lines described in Corollary 
\ref{profile}. 
\section{Examples}\label{example}
In this section, we give some examples of minimal surfaces with finite total curvature in $\mathbb{M} \times \mathbb{R}$.
\begin{enumerate}
 \item \textit{Vertical planes}. The simplest examples are the vertical totally geodesic planes $\alpha \times \mathbb{R}$, where $\alpha$ is a horizontal geodesic. Their 
 total curvature is zero, and these are the only surfaces satisfying this condition. In fact, let $\Sigma$ be a minimal surface with vanishing total curvature. The Gauss 
 equation states that
\begin{center}
$K_{\Sigma} = K_{\mathbb{M} \times \mathbb{R}}|_{G(\Sigma)}+ K_{ext}$, 
\end{center} 
where $K_{\Sigma}$ and $K_{ext}$ are the intrinsic and extrinsic curvatures of $\Sigma$, respectively, and $K_{\mathbb{M} \times \mathbb{R}}|_{G(\Sigma)}$ is the sectional 
curvature of the ambient restricted to the Grassmanian of tangent planes of $\Sigma$. The curvature $K_{\Sigma}$ is nonpositive, by the minimality of $\Sigma$, and thus 
$K_{\Sigma}$ is identically zero, since the total curvature vanishes. It implies that $K_{\mathbb{M} \times \mathbb{R}}|_{G(\Sigma)} \equiv K_{ext} \equiv 0$, therefore 
$\Sigma$ is a totally geodesic surface whose tangent planes are always vertical. Finally, given a vertical plane $P \in T_{(p , r)} (\mathbb{M} \times \mathbb{R})$, there 
is exactly one totally geodesic surface in $\mathbb{M} \times \mathbb{R}$ that is tangent to $P$, which is $\gamma_v \times \mathbb{R}$, where $\gamma_v$ is the geodesic of
$\mathbb{M}$ satisfying $\gamma'_v (0) = v \in (T_p \mathbb{M} \times \{ 0 \}) \cap P$, $v \neq 0$, and the assertion is proved. 

\item \textit{Scherk graphs}. Let $P$ an ideal geodesic polygon in $\mathbb{M}$ whose vertices are the points of infinity $p_1, \cdots , p_{2n} \in \partial_{\infty} \mathbb{M}$.
 Denote by $A_i$ the complete geodesic connecting $p_{2i-1}$ to $p_{2i}$, $i = 1, \cdots , n$, and by $B_i$ the complete geodesic connecting $p_{2i}$ to $p_{2i + 1}$, 
 $i = 1, \cdots , n$, where $p_{2n+1} := p_1$. 

 Consider the family $\mathcal{H} = \{ H_i \}_{i=1}^{2n}$, where for each $i = 1, \cdots , 2n$, $H_i$ is a horocycle at $p_i$ bounding an open horodisc $F_i$ such that 
 $H_i \cap H_j = \emptyset$ if $i \neq j$. Denote by $\tilde{A}_i$ the geodesic segment given by $A_i \backslash (\cup_{j=1}^{2n} F_j)$, and define $\tilde{B}_i$
 in a similar way.
 Let $\gamma(i)$ be the geodesic segment connecting the two interior points of $H_i \cap P$ and denote by $P(\mathcal{H})$ the polygon 
 \begin{align*}
  \bigcup\limits_{i=1}^{n} (\tilde{A}_i \cup \tilde{B}_i) \cup \bigcup\limits_{j=1}^{2n} \gamma(j)
 \end{align*} 
 and $D(\mathcal{H})$ the domain bounded by $P(\mathcal{H})$.
 
 For a positive $r>0$, let $G(r, \mathcal{H})$ be the graph of the minimal surface equation over $D(\mathcal{H})$ whose boundary data are given by $r$ on 
 $\cup_{i=1}^{n} \tilde{A}_i$ and zero elsewhere on $P(\mathcal{H})$.
 
 Define the following quantities:
 \begin{align*}
  a(P) & = \sum_{i=1}^n |\tilde{A}_i|;\\
  b(P) & = \sum_{i=1}^n |\tilde{B}_i|.
 \end{align*}
Let $D$ the domain bounded by $P$. We say that a geodesic convex polygon $Q$ is \textit{inscribed} in $D$ if the set of vertices of $Q$ is contained in the set of vertices of $P$. 
Using the horocycles $H_i$, define
\begin{align*}
  a(Q) & = \sum_{\tilde{A}_i \subset Q} |\tilde{A}_i|;\\
  b(Q) & = \sum_{\tilde{B}_i \subset Q} |\tilde{B}_i|.
 \end{align*}
 We also define $|Q|$ as the sum of the lengths of the geodesic segments contained in the sides of $Q$ and determined by the horocycles $H_i$.
 In [GR], the authors proved the following theorem:
 \begin{theorem} \label{galvez} There is a solution to the Dirichlet problem for the minimal surface equation in the domain $D$ bounded by $P$ with prescribed data $+\infty$ at 
 $A_i$ and $-\infty$ at $B_i$ if and only if the following two conditions are satisfied:
\begin{enumerate}
\item $a(P) - b(P) = 0$,
\item For all inscribed polygons $Q$ in $D$ different from $P$ there exist horocycles at the vertices such that
\begin{center}
$2a(Q)<|Q|$ and $2b(Q)<|Q|$.
\end{center}
\end{enumerate}
Moreover, the solution is unique up to additive constants.
 \end{theorem}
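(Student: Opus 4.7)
The plan is to adapt the classical Jenkins--Serrin method for Scherk-type problems to the present Hadamard surface setting. The central analytical device is the conjugate flux $1$-form: for any $C^2$ solution $u$ of the minimal surface equation on a domain $\Omega \subset \mathbb{M}$, the $1$-form $\omega_u := \ast(du/W)$, with $W := \sqrt{1 + |\nabla u|^2}$, is closed. For any piecewise smooth arc $\alpha \subset \overline{\Omega}$ one has the fundamental bound $\bigl|\int_\alpha \omega_u\bigr| \leq |\alpha|$, with equality only when $u$ tends to $\pm \infty$ along $\alpha$ with the correct co-normal orientation. This plays the role of the divergence theorem in the absence of pointwise boundary values and is the ingredient that ties the analysis to the combinatorial conditions.

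For the \emph{necessity} direction, I would assume $u$ solves the problem and integrate $\omega_u$ over the truncated polygon $P(\mathcal{H})$. On $\tilde{A}_i$ the datum is $+\infty$, giving contribution $+|\tilde{A}_i|$; on $\tilde{B}_i$ the datum is $-\infty$, giving $-|\tilde{B}_i|$; on each horocyclic arc $\gamma(j)$ the contribution is bounded in absolute value by $|\gamma(j)|$ and shrinks to zero as the horocycle is pushed toward the ideal vertex. Since $\omega_u$ is closed and $P(\mathcal{H})$ bounds a region, the total integral vanishes; in the limit this yields condition (1), $a(P) = b(P)$. For an inscribed convex polygon $Q \neq P$, at least one side is a chord strictly interior to $D$, on which the flux bound is strict. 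Isolating the $\tilde{A}_i$-sides and the $\tilde{B}_i$-sides and using that $\omega_u$ is closed on the region cut out by $Q$ then produces the strict inequalities $2a(Q) < |Q|$ and $2b(Q) < |Q|$ of condition (2).

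For the \emph{sufficiency} direction, I would follow a Perron-type monotone-solutions scheme. Let $u_n$ solve the Dirichlet problem in $D$ with bounded Lipschitz data equal to $n$ on the $A_i$ and $-n$ on the $B_i$, interpolated smoothly in between; such $u_n$ exist by the standard theory on domains in a Hadamard surface. After normalizing by an additive constant, the crux is to show that $\{u_n\}$ is locally uniformly bounded on $D$. This is established by a divergence-set analysis in the style of Jenkins--Serrin: if $u_n \to +\infty$ on a maximal nontrivial subregion $U \subset D$, the boundary of $U$ is a union of geodesic chords forming an inscribed polygon $Q$, and the flux identity $\int_{\partial U} \omega_{u_n} = 0$ passes to an equality in the length bound on each component of $\partial Q$, contradicting the strict inequality in condition (2). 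Once interior uniform bounds hold, standard interior $C^{2,\alpha}$ estimates extract a convergent subsequence $u_n \to u$, and condition (1) together with horocyclic barriers at the ideal vertices --- whose construction in $\mathbb{M}$ relies on the comparison geometry available under pinched negative curvature --- ensures that $u$ realizes the prescribed boundary values $\pm\infty$. Uniqueness up to an additive constant follows by applying the flux form to $u_1 - u_2$: any oscillation would create a level set bounding a region of strict flux deficit, a contradiction.

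The main obstacle is the interior uniform bound on $\{u_n\}$, that is, ruling out a nontrivial divergence set. This is where condition (2) must be invoked quantitatively, and where the specific geometry of $\mathbb{M}$ matters: one needs sufficient geometric control on horocycles (quasi-convexity and exponential divergence of neighboring geodesics) to guarantee that the approximate flux computations on the divergence set can be sharpened to match the strict inequality. The pinched curvature assumption $-a^2 \leq K_{\mathbb{M}} \leq -b^2$ ensures that horocycles in $\mathbb{M}$ behave qualitatively as in $\mathbb{H}^2$, which is precisely what allows the Jenkins--Serrin bookkeeping to go through in this more general setting.
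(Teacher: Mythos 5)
This theorem is not proved in the paper at all: it is quoted verbatim from the reference [GR] (Gálvez--Rosenberg), and the paper only uses its statement and the approximating graphs $G(r_k,\mathcal{H}^k)$ from its proof. Your outline is essentially the Jenkins--Serrin strategy that [GR] (following Collin--Rosenberg in $\mathbb{H}^2$) actually implements --- closed flux form $\ast(du/W)$ with the length bound, necessity by integrating over the horocycle-truncated polygon, sufficiency by a monotone approximation plus a divergence-set analysis, uniqueness by a flux argument on the difference --- so there is no divergence of method to report. Do note that what you have written is a program rather than a proof: the genuinely hard steps (existence of the approximating solutions on the unbounded ideal domain, the structure theorem asserting that the boundary of a divergence set consists of interior geodesic chords joining vertices of $P$, and the barrier/flux estimates at the ideal vertices under the pinching $-a^2\leq K_{\mathbb{M}}\leq -b^2$) are exactly the content of the cited reference and are only named, not established, in your sketch; also, the approximation used there is over the compact truncated domains $D(\mathcal{H}^k)$ with data $r_k$ on the $\tilde{A}_i$ and $0$ elsewhere, rather than over all of $D$ with data $\pm n$.
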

 The graph of the function described in the theorem are called the \textit{Scherk graph} over $D$. 
 
 By the proof of Theorem \ref{galvez} (see [CR] and [GR]), the Scherk graph $\Sigma_n$ over $D$ is a limit of the sequence of surfaces $(G_k := G(r_k, \mathcal{H}^k))_k$, where 
 $(r_k)_k$ is a sequence going to $+\infty$ as $k$ goes to $+\infty$ and, for each $k$, $\mathcal{H}^k = \{ H^k_i \}_{i=1}^{2n}$ a family of horocycles of $\mathbb{M}$ such that 
 $(H^k_i)_{i=1}^{\infty}$ is a sequence of nested horocycles at $p_i$ converging to this point. Using Gauss-Bonnet on each $G_k$, we conclude that the total curvature of those
 surfaces is uniformly bounded from below by $2\pi(1 - n)$, therefore $\Sigma_n$ has finite total curvature.

 In order to compute explicitly the total curvature of $\Sigma_n$, we notice that, since this surface is a graph, the coincidences mentioned in Corollary \ref{profile} do not 
 happen. Therefore, we have that $m_p = n-1$, following the notation of the same corollary. Consequently, applying the formula of Theorem \ref{main}, we conclude that the total 
 curvature of $\Sigma_n$ is precisely $2\pi(1-n)$.

\item \textit{Horizontal catenoids}. In [P], the author constructs a class of minimal annuli with horizontal slices of symmetry. These catenoids $C$ are similar to the ones 
constructed in [MR] and [Py]. They are limits of compact minimal annuli $(C_n)_{n \in \mathbb{N}}$ whose boundary components $S_n^1$ and $S_n^2$ are contained in the vertical 
planes $P_n^{1}$ and $P_n^{2}$, respectively. Denote by  $\kappa_n^i$, $\hat{\kappa}_n^i$ and $\tilde{\kappa}_n^i$ the geodesic curvatures of $S_n^i$ as a curve of $C_n$, 
$P_n^{i}$ and $\mathbb{M} \times \mathbb{R}$, respectively. Clearly, we have that $\kappa_n^i \leq \tilde{\kappa}_n^i$, and since $P_n^{i}$ is a totally geodesic submanifold of 
$\mathbb{M} \times \mathbb{R}$, the curvatures $\hat{\kappa}_n^i$ and $\tilde{\kappa}_n^i$ are equal, up to a sign. Moreover, for each $i$, the induced metric on $P_n^{i}$ is 
Euclidean, thus the total curvature of $S_n^i$ is $2\pi$. Consequently, using Gauss-Bonnet,
\begin{align*}
 & \int_{C_n} K_{C_n} + \int_{\partial C_n} \kappa_{\partial C_n} = 0 \leftrightarrow \\
 & |\int_{C_n} K_{C_n}| \leq \int_{\partial C_n} |\kappa_{\partial C_n}| \leftrightarrow \\
 & |\int_{C_n} K_{C_n}| \leq \int_{S_1} |\kappa_n^1| + \int_{S_2} |\kappa_n^2| \leftrightarrow \\
 & |\int_{C_n} K_{C_n}| \leq \int_{S_1} |\hat{\kappa}_n^1| + \int_{S_2} |\hat{\kappa}_n^2| = 4\pi.
 \end{align*}
Therefore, $C$ has finite total curvature and its absolute value is at most $4\pi$. On the other hand, by the formula of Theorem \ref{main}, 
\begin{align*}
|\int_{C_n} K_{C_n}| \geq 4\pi, 
\end{align*}
 thus $\int_{C} K_{C} = -4 \pi$.
\end{enumerate}

\section{Appendix}\label{appendix}
Here, we provide a detailed discussion about some basic results which are useful along the text.
\subsection{Vertical annuli in $\mathbb{M} \times \mathbb{R}$}
In this subsection, we study complete vertical rotational minimal catenoids in $\mathbb{H}^2 \times \mathbb{R}$. We prove that, when suitably placed in 
$\mathbb{M} \times \mathbb{R}$, their mean curvature vector fields do not vanish at any point. We also prove that, for a fixed point $p \in \mathbb{M}$ and \ positive number $R>0$,
there exists a positive number $h = h(p,R)$ such that there is no minimal annulus whose boundary is contained in the set $B_R(p) \times \{ -h' , h' \}$ for $h' > h$, where 
$B_R(p)$ is the open ball of radius $R$ centered in $p$.
\subsubsection{Comparing geometries}
Around a point of $\mathbb{M}$, we consider polar coordinates $(s, \theta)$ on the surface, and the metric is given by $ds^2 + Gd\theta^2$, for some positive smooth 
function $G$ of $s$ and $\theta$. In particular, when $\mathbb{M}$ is the hyperbolic space of curvature $-k^2$, $k>0$ (notation: $\mathbb{H}^2(-k^2)$), we have that the 
function $G$ is precisely $G^{(k)} (s, \theta) := sinh^2 (ks)$. 

Let us consider a rotational surface $\Sigma$ in $\mathbb{M} \times \mathbb{R}$. We can parametrize it by $(s, \theta) \mapsto (s, \theta , h(s))$, and the associated 
coordinate frame is ${\bar{\partial}}_s = \partial_s + h'(s)\partial_z$ and ${\bar{\partial}}_{\theta} = {\partial}_{\theta}$ (here, we consider in 
$\mathbb{M} \times \mathbb{R}$ the coordinates $(s, \theta , z)$).  So, the vector field $N = (1 + h'(s)^2)^{-\frac{1}{2}}(-h'(s)\partial_s + \partial_z)$ along $\Sigma$
is normal and unitary, and the mean curvature with respect to it is given by
\begin{align*}
2H = \frac{1}{2G(1 + h'(s)^2)^{\frac{3}{2}}} \left( 2G h''(s) + (1 + h'(s)^2) h'(s) G_s \right).
\end{align*}
Then the surface $\Sigma$ is minimal if and only if 
\begin{center}
$2Gh''(s) + (1 + h'(s)^2)h'(s) G_s = 0$. 
\end{center}
In particular, when $\mathbb{M} = \mathbb{H}^2 (-k^2)$, the equation becomes
\begin{align}\label{eqhip}
sinh(ks)h''(s) + k cosh(ks)(1 + h'(s)^2)h'(s) = 0.
\end{align}
Fix two constants $A, k > 0$ and let $R_{A,k}:= \frac{arcsinh(A)}{k}$. Consider the function $h_{A,k} : [R_{A,k} , + \infty) \rightarrow \mathbb{R}$ defined by
\begin{align*}
 h_{A,k}(s) = \int_{R_{A,k}}^{s} \frac{A}{\sqrt{sinh^2 (kr) - A^2}}dr.
\end{align*}
The following facts about $h_{A,k}$ are easy to verify:
\begin{itemize}
 \item $h_{A,k} \in C^{\infty}((R_{A,k} , + \infty)) \cap C^{0}([R_{A,k} , + \infty))$;
 \item $h_{A,k}$ solves Equation \ref{eqhip} on the domain $(R_{A,k} , + \infty)$; 
 \item $h'_{A,k} > 0$ and $lim_{s \rightarrow R_{A,k}} h'(s) = + \infty$.
\end{itemize}
In $\mathbb{H}^2(-k^2) \times \mathbb{R}$, define the subset 
\begin{align*}
C^{A,k} := \{ (s , \theta , (-1)^{j} h_{A,k}(s)), s \geq R_{A,k} , j \in \{ 0 , 1\} \}. 
\end{align*}
Obviously, $C^{A,k}$ is a complete vertical rotational minimal catenoid in the space $\mathbb{H}^2(-k^2) \times \mathbb{R}$.

We now define, in $\mathbb{M} \times \mathbb{R}$, the surface
\begin{align*}
C_{\mathbb{M}}^{A,k} := \{ (s , \theta , (-1)^{j} h_{A,k}(s)), s \geq R_{A,k} , j \in \{ 0 , 1\} \}, 
\end{align*}
for some fixed polar coordinate system in $\mathbb{M}$. This surface is a complete vertical rotational annulus in $\mathbb{M} \times \mathbb{R}$. If the sectional curvature 
of $\mathbb{M}$ satisfies $-k_1^2 < K_{\mathbb{M}} < -k_2^2$, then, by a slight variation of Proposition 2 of [GL], we have that
\begin{align}\label{lozano}
\frac{\displaystyle G^{(k_1)}_s}{\displaystyle G^{(k_1)}} 
 > \frac{\displaystyle G_s}{\displaystyle G} > 
 \frac{\displaystyle G^{(k_2)}_s}{\displaystyle G^{(k_2)}}. 
\end{align}
By Equation \ref{eqhip}, we obtain the inequalities
\begin{align*} 
2Gh_{A,k_1}''(s) + (1 + h_{A,k_1}'(s)^2)h_{A,k_1}'(s) G_s & < 0; \\
2Gh_{A,k_2}''(s) + (1 + h_{A,k_2}'(s)^2)h_{A,k_2}'(s) G_s & > 0,
\end{align*}
for any $A>0, i=1,2$.

The catenoid $C_{\mathbb{M}}^{A,k}$ separates $\mathbb{M} \times \mathbb{R}$ in two connected components. One of them contains $\mathbb{M} \times (T , + \infty)$, for some 
$T \in \mathbb{R}$, which we call the \textit{inner region} of $C_{\mathbb{M}}^{A,k}$. The other component is the \textit{outer region} of the catenoid.

We say that the mean curvature vector field $\overrightarrow{H}_{A,k}$ of $C_{\mathbb{M}}^{A,k}$ points \textit{inwards} (resp. \textit{outwards}) when it is nonzero 
everywhere and it points to the inner region (resp. to the outer region). With the above reasoning, we conclude the following result:

\begin{proposition} \label{catenoid} For a Hadamard surface $\mathbb{M}$, suppose that the inequalities $-k_1^2 < K_{\mathbb{M}}  < -k_2^2$ hold. Then, for any positive $A$, the 
vector field $\overrightarrow{H}_{A,k_1}$ points outwards, while $\overrightarrow{H}_{A,k_2}$ points inwards.
\end{proposition}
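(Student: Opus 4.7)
The plan is to substitute the profile function $h_{A,k}$ directly into the mean curvature formula for rotational surfaces in $\mathbb{M} \times \mathbb{R}$ and then read off the sign of $H$ from the warping-function comparison (\ref{lozano}). The computation is essentially algebraic, and the geometric conclusion follows from a short argument about the orientation of $N$.

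I would begin with the general formula
\begin{align*}
2H = \frac{1}{2G(1+h'(s)^2)^{3/2}}\bigl(2Gh''(s) + (1+h'(s)^2)h'(s)G_s\bigr)
\end{align*}
applied to $h = h_{A,k}$. Since $h_{A,k}$ was constructed to satisfy the analogous minimal equation in $\mathbb{H}^2(-k^2) \times \mathbb{R}$, Equation (\ref{eqhip}) yields
\begin{align*}
h_{A,k}''(s) = -\frac{(1+h_{A,k}'(s)^2)\,h_{A,k}'(s)\,G^{(k)}_s}{2\,G^{(k)}}.
\end{align*}
Plugging this into the mean curvature formula for $\mathbb{M} \times \mathbb{R}$ and factoring, the warping terms combine into the clean identity
\begin{align*}
2H = \frac{h_{A,k}'(s)}{2(1+h_{A,k}'(s)^2)^{1/2}}\left(\frac{G_s}{G} - \frac{G^{(k)}_s}{G^{(k)}}\right).
\end{align*}

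Now (\ref{lozano}) immediately gives the sign of $H$: for $k=k_1$ the bracket is strictly negative, and since $h_{A,k_1}'(s) > 0$ we obtain $H < 0$ on the upper sheet; for $k=k_2$ the bracket is strictly positive, so $H > 0$ there. In particular $\overrightarrow{H}$ is nowhere vanishing. To translate these signs into the inward/outward dichotomy, I would observe that on the upper sheet the chosen unit normal $N = (1+h'(s)^2)^{-1/2}(-h'(s)\partial_s + \partial_z)$ has strictly positive vertical component, so an infinitesimal displacement along $N$ raises the height above the graph $z = h_{A,k}(s)$ and therefore lands in the component of the complement of $C_{\mathbb{M}}^{A,k}$ that contains $\mathbb{M} \times (T,+\infty)$, i.e., the inner region. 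Consequently $\overrightarrow{H} = HN$ points inwards exactly when $H > 0$ and outwards exactly when $H < 0$, which yields the claim for $k_2$ and $k_1$ respectively. The lower sheet is handled by the reflection $z \mapsto -z$. The only point of care is ensuring the orientation conventions on the two sheets are consistent with the geometric labelling, which is immediate from the reflective symmetry of $C_{\mathbb{M}}^{A,k}$; beyond this book-keeping, the proof is a direct computation.
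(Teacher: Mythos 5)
Your proof is correct and follows essentially the same route as the paper: substitute the hyperbolic profile equation (\ref{eqhip}) into the general mean curvature formula, factor out $\frac{G_s}{G}-\frac{G^{(k)}_s}{G^{(k)}}$, and read off the sign from (\ref{lozano}). The only difference is that you make explicit the orientation bookkeeping (which side the normal $N$ points to, and the reflection to the lower sheet) that the paper leaves implicit, which is a welcome addition rather than a deviation.
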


\textbf{Remark.} Concerning the variation of Proposition 2 of [GL], we need to assure that the inequalities in (\ref{lozano}) are strict, which is not done in the reference. 
Indeed, if $G^i(s , \theta) : = sinh^2(k_i s)$, for $i=1,2$, it is true that the functions $f_\theta(s) = \frac{G^1_s(s , \theta)}{2G^1(s , \theta)}$ and 
$g_\theta(s) = \frac{G_s(s , \theta)}{2G(s , \theta)}$ satisfy the equations
\begin{center}
 $f_{\theta}' + f_{\theta}^2 = k_1^2 > \frac{\displaystyle (- K_{\mathbb{M}}(\cdotp ,\theta) + k_1^2)}{\displaystyle 2}$; 
 $g_{\theta}' + g_{\theta}^2 = - K_{\mathbb{M}}(\cdotp ,\theta) < \frac{\displaystyle (-K_{\mathbb{M}}(\cdotp,\theta) + k_1^2)}{\displaystyle 2}$.
\end{center}
It is clear that $f_{\theta}$ and $g_{\theta}$ satisfy the conditions of Corollary 2.2 of [PRS] (see [GL] for details about $f_{\theta}$ and $g_{\theta}$). Then, 
defining
\begin{center}
 $\phi_{\theta}(s) = s\int_{0}^s (f_{\theta}(t) - t^{-1}) dt$;
 
 $\psi_{\theta}(s) = s\int_{0}^s (g_{\theta}(t) - t^{-1}) dt$,
\end{center}
we can apply the ideas of Lemma 2.1 of [PRS]. Explicitly,
\begin{align*}
& (\phi'_{\theta} \psi_{\theta} - \phi_{\theta} \psi'_{\theta})'(s) \geq (k_1^2 + K_{\mathbb{M}}(s , \theta))\phi_{\theta}(s) \psi_{\theta}(s) \leftrightarrow \\
& \frac{\displaystyle G^1_s(s , \theta)}{\displaystyle G^1(s , \theta)} - \frac{\displaystyle G_s(s , \theta)}{\displaystyle G(s , \theta)} \geq \frac{\displaystyle 2 \int_{0}^s (k_1^2 + K_{\mathbb{M}}(x , \theta))\phi_{\theta}(x) \psi_{\theta}(x) dx}{\phi_{\theta}\psi_{\theta}(s)},
\end{align*}
then one of the strict inequalities in (\ref{lozano}) was proved. The other one can be proved in a similar procedure. 

\subsubsection{Height bounds of minimal annuli}
We prove here the following proposition.

\begin{proposition}\label{cylinder} If $\mathbb{M}$ is a Cartan-Hadamard manifold and if $B_R(p)$ is a compact subset of $\mathbb{M}$, there exists  $h_0 > 0$ depending on 
$p$ and $R$ such that, for any two Jordan curves $C_1, C_2 \subset B_R(p)$ and $h' > h$, there is no minimal annulus in $\mathbb{M} \times \mathbb{R}$ whose boundary 
is given by $(C_1 \times \{ 0 \}) \cup (C_1 \times \{ h' \})$.
\end{proposition}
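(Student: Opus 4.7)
The plan combines the convex hull property with a sliding argument that uses the catenoids $C_{\mathbb{M}}^{A,b}$ from the preceding subsection as moving barriers. Since $\mathbb{M}$ is Cartan-Hadamard, every geodesic ball is convex, so by the convex hull property for minimal surfaces the annulus $\Sigma$ must lie in $\bar B_R(p) \times [0, h']$. By Proposition \ref{catenoid} and an explicit computation of the integral $L(A) := h_{A,b}(\infty)$, each catenoid $C_{\mathbb{M}}^{A,b}$ has mean curvature vector pointing inwards and total vertical extent $2L(A)$ strictly less than $\pi/b$, independently of $A$. I set $h_0 := \pi/b + 1$.

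Assume for contradiction that $h' > h_0$. Fix $A$ large enough that $R_{A,b} > R$, write $L := L(A) < h'/2$, and consider the family $\{C_q\}_{q \in \mathbb{M}}$ of catenoids obtained by centering $C_{\mathbb{M}}^{A,b}$ at $q$ with neck at height $h'/2$. Since $L < h'/2$, each $C_q$ is contained in the open slab $\mathbb{M} \times (h'/2 - L, h'/2 + L) \subset \mathbb{M} \times (0, h')$; in particular $C_q$ never meets the planes $\{z = 0\}$ and $\{z = h'\}$, hence never meets $\partial \Sigma$. For $q = p$ the horizontal projection of $C_p$ is $\mathbb{M} \setminus B_{R_{A,b}}(p)$, disjoint from $\bar B_R(p)$, so $C_p \cap \Sigma = \emptyset$. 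On the other hand, for $q$ far enough from $p$ (say $d(p, q) > R + R_{A,b}$), the ball $\bar B_R(p)$ lies in the projection of $C_q$, and the continuous function $z - (h'/2 + h_{A,b}(d(\cdot, q)))$ on $\Sigma$ is strictly negative on $C_1 \times \{0\}$ and strictly positive on $C_2 \times \{h'\}$ (using $h_{A,b} \leq L < h'/2$); by connectedness of $\Sigma$, it must vanish somewhere, so $\Sigma$ meets the upper sheet of $C_q$.

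Moving $q$ continuously along a geodesic ray from $p$, the set of parameters with $C_q \cap \Sigma = \emptyset$ is open and contains $p$, while its complement is non-empty. By the compactness of $\Sigma$ and the smooth dependence of $C_q$ on $q$, there is a first value $q_*$ at which $C_{q_*} \cap \Sigma \neq \emptyset$; the contact at $q_*$ is necessarily a tangency (transversal intersections would persist for parameters slightly closer to $p$), and by the observation above the tangent point lies in the interior of $\Sigma$. The tangency principle for two minimal surfaces meeting tangentially at an interior point, followed by analytic continuation and the connectedness of $\Sigma$, then forces $\Sigma \subset C_{q_*}$. But $C_{q_*}$ does not meet $\{z = 0\}$ or $\{z = h'\}$, whereas $\partial \Sigma$ is contained in the union of these two planes --- a contradiction.

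The main technical obstacle is to justify the continuity and first-contact arguments rigorously (compactness of $\Sigma$, smooth dependence of $C_q$ on $q$, and properness of the family), and to propagate the local coincidence at $q_*$ to the global containment $\Sigma \subset C_{q_*}$ via analytic continuation. Both steps are standard once one notes that $\Sigma$ is compact with compact boundary and that minimal surfaces in the real-analytic ambient $\mathbb{M} \times \mathbb{R}$ are themselves real-analytic.
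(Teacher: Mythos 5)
Your argument is correct in substance, but it takes a genuinely different route from the paper. The paper argues by compactness: assuming a sequence of annuli with heights $h_n \to \infty$, it replaces them by stable area-minimizing annuli with boundary $\partial B_R(p) \times \{-h_n , h_n\}$ via Meeks--Yau, extracts a limit using Schoen's curvature estimates, and derives a contradiction by blowing up the limit annulus at points approaching the smallest cylinder $\partial B_{R'}(p) \times \mathbb{R}$ containing it. You instead give a direct barrier argument: the convex hull property confines $\Sigma$ to $\bar B_R(p) \times [0,h']$, the explicit bound $2L(A) < \pi/b$ (which does follow from $\int_A^\infty A\,du\,/\,(u\sqrt{u^2-A^2}) = \pi/2$ after substituting $u = \sinh(br)$) gives a uniform height for the inward-mean-convex catenoids of Proposition \ref{catenoid}, and horizontal sliding produces an interior first contact. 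This is shorter, quantitative (it yields the explicit $h_0 = \pi/b + 1$), and consonant with how the paper itself uses these catenoids in the proof of Theorem \ref{main}. What the paper's approach buys is generality: your proof needs the upper curvature bound $K_{\mathbb{M}} \le -b^2 < 0$ to invoke Proposition \ref{catenoid} and to get the uniform height bound, whereas the proposition is stated for an arbitrary Cartan--Hadamard manifold and the compactness proof uses no pinching. Two points to fix in your write-up: first, the last step should not appeal to "the tangency principle for two minimal surfaces" and analytic continuation, because $C_{q_*}$ is \emph{not} minimal in $\mathbb{M}\times\mathbb{R}$; the correct and simpler conclusion is that at the first contact $\Sigma$ lies in the closed inner region, toward which $\vec H_{C_{q_*}}$ points, so the comparison principle for a minimal surface touching a strictly mean-convex barrier from its mean-convex side rules the tangency out immediately. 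Second, you should record explicitly that $\Sigma$ remains in the inner region of $C_{q(t)}$ for all $t$ before first contact (it does, by connectedness of $\Sigma$ and continuity of the family starting from $q=p$), since this is what determines the side of the tangency and hence the sign in the maximum principle.
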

\begin{proof} Suppose, by contradiction, that there is a sequence $\{ \Sigma_n \}_{n \in \mathbb{N}}$ of minimal annuli such that 
$\partial \Sigma_n \subset \mathbb{M} \times \{ - h_n , h_n \}$, where $(h_n)_{n \in \mathbb{N}}$ is an increasing sequence of positive numbers which goes to $+\infty$. 
By [MY], there is a minimal stable annuli $S_n$ whose boundary is $\partial B_R(p) \times \{ -h_n , h_n \}$ that minimizes area among the annuli contained in the 
unbounded component of $(\mathbb{M} \times [-h_n , h_n]) \backslash \Sigma_n$. We then have area and curvature estimates for the sequence $(S_n)_n$ in compact sets, then, by 
a diagonal argument, we have that a subsequence of $(S_n)_n$ converges to a cylindrically bounded minimal annuli $S$. Since all the $S_n$ are stable, the surface $S$ 
also is. By Theorem 3 of [S], the second fundamental form of $S$ is bounded. 


Obviously, $S \subset B_R(p) \times \mathbb{R}$, and let $R'$ the smallest number such that $S \subset B_{R'}(p) \times \mathbb{R}$ (by the maximum principle, this number 
exists). By the choice of $R'$, we can choose a sequence $(s_n = (q_n, t_n))_{n \in \mathbb{N}}$ of points of $S$, $q_n \in \mathbb{M}, t_n \in \mathbb{R}$ such that $(q_n)_n$
converges to a point $q$ in $\partial B_R(p)$. We then consider, for each $n$, the surface $S^n$, a vertical translation of $S$ such that $\bar{s}_n := (q_n , 0) \in S^n$. 
The points $\bar{s}_n$ have $\delta$-neighborhoods on $S^n$ that are graphs of functions $F_n$ over the $\delta$-disc in $T_{\bar{s}_n} S^n$ such that the set $||F_n||_{C^2}$
is uniformly bounded. Therefore, up to a subsequence, the sequence $(T_{\bar{s}_n} S^n)$ converges to a vertical plane $P$ in $T_{(q,0)} (\mathbb{M} \times \mathbb{R})$, otherwise 
$S$ would not be contained in $B_R(p) \times \mathbb{R}$, and the sequence of graphs of $(F_n)_n$ converges to a minimal graph over a $\delta$-disc which intersects 
$\partial B_R(p) \times \mathbb{R}$ tangentially, which is impossible.  
\end{proof}

\section{References}
[CR] P. Collin, H. Rosenberg, Construction of harmonic diffeomorphisms and minimal graphs. Ann. Math. 172(3), 1879–1906 (2010).
\newline
[CY] S. Cheng, S.-T. Yau, Differential equations on Riemannian manifolds and their geometric applications, Comm. Pure Appl. Math. 28, (1975), 333-354.
\newline
[F] Y. Fang, Lectures on Minimal Surfaces in $\mathbb{R}^3$,  Australian National University, Centre for Mathematics and its Applications, Proceedings of the Centre for 
Mathematics and its Applications, Australian National University; v. 35, ISBN 0 7315 2443 8 (1996).
\newline
[Fr] K. Frensel, Stable complete surfaces with constant mean curvature, Bol. Soc. Brasil. Mat. 27, (1996) 129–144.
\newline
[GL] J.A. Gálvez, V. Lozano, Existence of barriers for surfaces with prescribed curvatures in $\mathbb{M}^2 \times \mathbb{R}$. J. Differ. Equ. 255(7), 1828–1838 (2013).
\newline
[GR] J.A. Gálvez, H. Rosenberg, Minimal surfaces and harmonic diffeomorphisms from the complex plane onto a Hadamard surface. Amer. J. Math., 132, 1249–1273 (2010).
\newline
[H] L. Hauswirth, Generalized Riemann examples in three-dimensional manifolds, Pacific Journal of Math., 224, no. 1, (2006), 91-117.
\newline
[Hu] A. Huber: On subharmonic functions and differential geometry in the large. Comm. Math. Helv.32, (1957), 13–72.
\newline
[HK] D. Hoffman, H. Karcher, Complete embedded minimal surfaces of finite total curvature, in: “Geometry, V”, Encyclopaedia Math. Sci. 90, Springer, Berlin, 5–93, 267–272 (1997).
\newline
[HNST] L. Hauswirth, B. Nelli, R. Sa Earp, E. Toubiana, Minimal ends in $\mathbb{H}^2 \times \mathbb{R}$ with finite total curvature and a Schoen type theorem, Advances in 
Mathematics, 274 (2015), 199–240.
\newline
[HR] L. Hauswirth, H. Rosenberg, Minimal surfaces of finite total curvature in $\mathbb{H} \times \mathbb{R}$. Mat. Contemp. 31, (2006), 65–80. 
\newline
[HSET] L. Hauswirth, R. Sa Earp, E. Toubiana, Associate and conjugate minimal immersions in $\mathbb{H}^2 \times \mathbb{R}$, Tohoku Math. J. 60(2), (2008), 267–286.
\newline
[JM]  L. Jorge, W. Meeks III, The topology of complete minimal surfaces of finite total Gaussian curvature, Topology 22 (1983), 203-221.  
\newline
[LTW] P. Li, L. Tam, J. Wang, Harmonic diffeomorphisms between Hadamard manifolds, Tran. AMS., 347, (1995), 3645–3658.
\newline
[MR] F. Morabito, M. M. Rodriguez, Saddle towers and minimal k–noids in $\mathbb{H}^2 \times \mathbb{R}$, J. Inst. Math. Jussieu 11 (2012), 333–349. MR2905307
\newline
[MY] W. H. Meeks, III and S. T. Yau. The existence of embedded minimal surfaces and the problem of uniqueness. Math. Z., 179(2):151–168, 1982.
\newline
[O] R. Osserman, A survey of minimal surfaces, 2nd edition, Dover Publications, New York, 1986. 
\newline
[P] R. Ponte, Minimal annuli in $\mathbb{M}^2 \times \mathbb{R}$, in preparation.
\newline
[Py] J. Pyo, New complete embedded minimal surfaces in $\mathbb{M}^2 \times \mathbb{R}$, Ann. Global Anal. Geom. 40, (2011), 167–176. MR2811623
\newline
[PR] J. Pyo, M.M. Rodriguez, Simply-connected minimal surfaces with finite total curvature in $\mathbb{H}^2 \times \mathbb{R}$, Int. Math. Res. Notices, 2014, 2944-2954 (2014).
\newline
[PRS] S. Pigola, M. Rigoli, A. G. Setti, Vanishing and finiteness results in geometric analysis: a generalization of the Bochner technique, Progress in Mathematics 266, 
Birkhäuser, Basel, 2008.  MR 2009m:58001  Zbl 1150.53001
\newline
[S] R. Schoen. Estimates for stable minimal surfaces in three-dimensional manifolds. In Seminar on minimal submanifolds, volume 103 of Ann. of Math. Stud., pages 111–126.
Princeton Univ. Press, 1983. MR795231 (86j:53094)
\newline
[SY] R. Schoen, S. T. Yau, Lectures on harmonic maps, Conf. Proc. Lecture Notes Geom. and Topology, II. International Press, Cambridge, Mass., (1997).
\newline
[St] K. Strebel, Quadratic differentials, Ergebnisse der Mathematik und ihrer Grenzgebiete (3) [Results in Mathematics and Related Areas (3)], vol. 5, Springer-Verlag, 1984.
\newline
[W] B. White. Complete surfaces of finite total curvature, J. Diff. Geom., 26:315-326, 1987.

\Addresses

\end{document}